
\NeedsTeXFormat{LaTeX2e}

\documentclass[11pt]{amsart}

\usepackage{amssymb,amsmath,amsfonts}
\usepackage{graphicx}


\newtheorem{theorem}{Theorem}[section] 
\newtheorem{lemma}[theorem]{Lemma}     
\newtheorem{corollary}[theorem]{Corollary}
\newtheorem{prop}[theorem]{Proposition}
\newtheorem{remark}[theorem]{Remark}




\def\neweq#1{\begin{equation}\label{#1}}
\def\endeq{\end{equation}}

\def\ri{\rightarrow}
\def\ep{\varepsilon}
\def\vphi{\varphi}
\def\la{\lambda}

\def\eq#1{(\ref{#1})}
\def\RR{{\mathbb R} }


\def\o{\Omega }
\def\oo{\overline\Omega }

\hoffset=-.55in
\textwidth=6in
\voffset=-.9in
\textheight=9.5in

\title[Singular elliptic inequalities in the exterior of a compact set]{Singular semilinear elliptic inequalities in the exterior of a compact set}

\author[M. Ghergu]{Marius Ghergu}

\address{School of Mathematical Sciences, University College Dublin, Belfield, Dublin 4, Ireland}

\email{marius.ghergu@ucd.ie}

\author[S.D. Taliaferro]{Steven D. Taliaferro}
\address{Department of Mathematics, Texas A$\&$M University,
College Station, TX 77843-3368, USA}

\email{stalia@math.tamu.edu}


\begin{document}
\maketitle

\begin{abstract}
We study the semilinear elliptic inequality $-\Delta
u\geq\vphi(\delta_K(x))f(u)$ in $\RR^N\setminus K,$ where $\vphi,
f$ are positive and nonincreasing  continuous functions. Here
$K\subset \RR^N$ $(N\geq 3)$ is a compact set with finitely many
components each of which is either the closure of a $C^2$ domain
or an isolated point and $\delta_K(x)={\rm dist}(x,\partial K)$.
We obtain optimal conditions in terms of $\vphi$ and $f$ for the
existence of  $C^2$ positive solutions. Under these conditions we
prove the existence of a minimal solution and we investigate its
behavior around $\partial K$ as well as the removability of the
(possible) isolated singularities.
\medskip

\noindent{\bf 2010 Mathematics Subject Classification}: 35J60;  35B05;  35J25;  35B40.\\
{\bf Key words}: Elliptic inequality;  singularity; boundary
behavior; radial symmetry

\end{abstract}


\section{Introduction} %

In this paper we study the existence and non-existence of $C^2$
positive solutions $u(x)$ of the following semilinear elliptic
inequality
\begin{equation}\label{phi}
-\Delta u\geq \vphi(\delta_K(x))f(u) \quad\mbox{ in
}\RR^N\setminus K,
\end{equation}
where $K$ is a compact set in $\RR^N$ ($N\geq 3$) and
$\delta_K(x):=$dist$(x,\partial K)$. We assume that $K$ has
finitely many connected components each of which is either the
closure of a $C^2$ domain or a singleton. We shall write
$K=K_1\cup K_2$ where $K_1$ is the union of all components of $K$
which are the closure of a $C^2$ domain and $K_2$ is the set of
all isolated points of $K$.

We also assume that

\medskip

\noindent$(A1)\qquad$ $f\in C^1(0,\infty)$ is a positive and
decreasing function;

\medskip

\noindent$(A2)\qquad$ $\vphi \in C^{0,\gamma}(0,\infty)$
($0<\gamma<1$) is a positive and nonincreasing function.

\medskip

Elliptic equations or inequalities in unbounded domains have been
subject to extensive study recently (see, e.g.,
\cite{dav1,dav2,gr,kon1,kon2,lisk,pucci1,pucci2} and the
references therein). In \cite{dav1,dav2} the authors are concerned with existence of solutions to $-\Delta u=u^p$ in $\RR^N\setminus\Omega$, where $\Omega\subset\RR^N$ is smooth and bounded. If $p>(N+2)/(N-2)$ it is obtained in \cite{dav1,dav2} that there exist infinitely many solutions that vanish on $\partial\Omega$ and decay slower than $|x|^{-2/(p-1)}$ at infinity.

In \cite{pucci1,pucci2} general elliptic inequalities of the form
$\pm {\rm div}\{A(|\nabla u|)\nabla u\} \geq f(u)$ in possibly unbounded domains are considered. Under some growing conditions on $A$ and $f$, the authors obtain existence of a solution. Large classes of elliptic inequalities in
exterior or cone-like domains involving various types of
differential operators  are considered in
\cite{kon1,kon2,lisk}. In
\cite{tal99,talc01,tal01,tal02,tal06} elliptic inequalities of the form $af(u)\leq -\Delta u\leq b f(u)$, $(b>a\geq 0)$ are
studied in a punctured neighborhood of the origin and asymptotic
radial symmetry of solutions is investigated. 
For $f$ and $\vphi$ that satisfy $(A1)$-$(A2)$, the equation $-\Delta u=\vphi(|x|)f(u)$ in $\RR^N$ was studied in \cite{lair1,lair2}. It is obtained that a solution exists if and only if $\int_0^\infty r\vphi(r)dr<\infty$.

The main novelty of the present paper is the presence of the
distance function $\delta_K(x)$ to the boundary of the compact set
$K$ which, as we shall see, will play a significant role in the
qualitative study of \eq{phi}. Whenever \eq{phi} has solutions we
show that it has a minimal solution $\tilde u$ and we are
interested in further properties of $\tilde u$ such  as
removability of possible singularities at isolated points of $K_2$
as well as boundary behavior around $K_1$.

In our approach to \eq{phi} we shall distinguish between the case
where $K$ is {\it non-degenerate}, that is, $K_1\neq \emptyset$,
and the case where $K$ is {\it degenerate}, that is
$K_1=\emptyset$, which means $K$ reduces to a finite set of
points.

We start first with the non-degenerate case $K_1\neq\emptyset$.
Our first result in this sense is the following:

\begin{theorem}\label{thoptim2}
Assume $(A1)$, $(A2)$ and $K_1\neq \emptyset$. Then, inequality
\eq{phi} has $C^2$ positive solutions if and only if
\begin{equation}\label{vphioptim}
\int_0^\infty r\vphi(r) dr<\infty.
\end{equation}
\end{theorem}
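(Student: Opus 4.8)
The plan is to split the argument into the two implications. For the necessity of \eqref{vphioptim}, suppose \eqref{phi} has a $C^2$ positive solution $u$ in $\RR^N\setminus K$. Since $K_1\neq\emptyset$, fix a component $D$ of $K_1$ which is the closure of a bounded $C^2$ domain; after a translation assume $0$ lies in the interior of $D$. Pick $R_0$ large enough that $K\subset B_{R_0}(0)$. On the exterior region $\abs{x}>R_0$ the distance $\delta_K(x)$ is comparable to $\abs x$ — more precisely $\delta_K(x)\le\abs x$ and $\delta_K(x)\ge \abs x - R_0 \ge \tfrac12\abs x$ for $\abs x\ge 2R_0$ — so that, using that $\vphi$ is nonincreasing, $\vphi(\delta_K(x))\ge\vphi(\abs x)$ on $\abs x > R_0$. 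Thus $-\Delta u\ge\vphi(\abs x)f(u)$ in $\abs x>R_0$. Now I would compare $u$ with the spherical average $\bar u(r)=\fint_{\abs x = r}u\,dS$, which satisfies $-( r^{N-1}\bar u')'\ge r^{N-1}\vphi(r)f(\bar u)$ (using $-\Delta u\ge 0$ so $\bar u$ is superharmonic, Jensen's inequality, and that $f$ is decreasing together with the fact that $\bar u\le \sup u$ is not immediately bounded — one must first argue $\bar u$ is bounded below by a positive constant, e.g. by superharmonicity $\bar u(r)$ is nonincreasing in $r$ hence $\bar u(r)\ge \liminf_{r\to\infty}\bar u(r)=:L\ge 0$, and if $L>0$ we are done quickly, while if $L=0$ one still has $\bar u(r)\le\bar u(R_0)$, so $f(\bar u(r))\ge f(\bar u(R_0))=:c_0>0$). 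Integrating the differential inequality for $\bar u$ twice and using $N\ge3$ then forces $\int_{R_0}^\infty r\vphi(r)\,dr<\infty$; since $\vphi$ is continuous on $(0,\infty)$ the integral near $0$ is finite, giving \eqref{vphioptim}.

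For the sufficiency, assume \eqref{vphioptim}. I would construct a $C^2$ positive \emph{supersolution} of the associated equation $-\Delta u=\vphi(\delta_K(x))f(u)$ and then, because $f$ is decreasing, verify directly that it solves the inequality \eqref{phi}; no fixed-point scheme is strictly needed for mere existence of a solution to the inequality. A natural candidate is a function of the form $u(x)=A + w(x)$ where $A>0$ is a constant and $w$ is built from a Newtonian-type potential: set $g(r):=\vphi(r)f(A)$ (using monotonicity of $f$, $f(u)\le f(A)$ whenever $u\ge A$, so it suffices to dominate $\vphi(\delta_K(x))f(A)$), extend $\delta_K$ harmlessly by noting $\delta_K(x)\le\abs{x}+C$, and define
\[
w(x)=\int_{R_0}^\infty \Phi(s)\,ds,\qquad \Phi(s):= s^{1-N}\int_{R_0}^{s} t^{N-1}\,\vphi(t-C)_+\,f(A)\,dt ,
\]
so that $w$ is radial, bounded (this is exactly where \eqref{vphioptim} is used, via Tonelli's theorem: $\int_{R_0}^\infty s^{1-N}\int_{R_0}^s t^{N-1}\vphi(t)\,dt\,ds \le C'\int_{R_0}^\infty t\vphi(t)\,dt<\infty$), and satisfies $-\Delta w\ge\vphi(\delta_K(x))f(A)$ in $\abs x>R_0$. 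One then patches this with a local supersolution near $K_1$ (on the compact shell $R_1<\abs x<R_0$ where $\delta_K$ stays away from $0$ only near $\partial K$, so $\vphi(\delta_K(x))$ is locally integrable and one solves a Poisson problem $-\Delta v=\vphi(\delta_K(x))f(A)$ with $v>0$; here the $C^2$-regularity of $\partial K_1$ and the Hölder assumption $(A2)$ on $\vphi$ give $v\in C^2$ via Schauder theory near the smooth boundary), choosing additive constants so the pieces glue into a global positive $C^2$ supersolution; since $f$ is decreasing and the supersolution is $\ge A$, it is a genuine $C^2$ positive solution of \eqref{phi}.

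I expect the main obstacle to be the \textbf{sufficiency} direction, specifically the regularity and gluing near $K_1$: ensuring the constructed function is genuinely $C^2$ across the interface between the "inner" region adjacent to $\partial K_1$ and the "outer" radial region, and handling the singular weight $\vphi(\delta_K(x))$ which blows up as $x\to\partial K_1$ but only like the nonincreasing function $\vphi$ of the distance, so that it lies in $C^{0,\gamma}$ up to the boundary only after composing with the $C^2$ distance function — this is where assumptions $(A1)$–$(A2)$ and the $C^2$ smoothness of $\partial K_1$ are all used together with Schauder estimates. The exterior estimate, by contrast, is the routine ODE computation sketched above. A secondary technical point in the necessity direction is justifying the passage to spherical averages when the solution is not assumed bounded; the monotonicity of $\bar u$ coming from superharmonicity resolves this cleanly.
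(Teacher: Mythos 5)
There is a genuine gap in your necessity argument. You only prove $\int_{R_0}^\infty r\vphi(r)\,dr<\infty$ and then assert that ``since $\vphi$ is continuous on $(0,\infty)$ the integral near $0$ is finite.'' This is false: $\vphi$ is continuous and nonincreasing on the \emph{open} interval $(0,\infty)$ and may blow up arbitrarily fast as $r\searrow 0$ (e.g.\ $\vphi(r)=r^{-3}$ near $0$), so $\int_0^1 r\vphi(r)\,dr$ can perfectly well diverge. The finiteness of $\int_0^1 r\vphi(r)\,dr$ is precisely the part of \eqref{vphioptim} that is forced by the hypothesis $K_1\neq\emptyset$, and it is the harder half of the necessity: one must exploit the solution's behavior as $x$ approaches the $C^2$ boundary $\partial K_1$, where $\delta_K(x)\to 0$. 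The paper does this in Proposition \ref{thv} by integrating $-\Delta u$ over the tubular shells $\{0<\delta(x)<r\}$ around a component of $K_1$, controlling $g(r)=\int_{\partial\Omega_r}u\,d\sigma$ via the tubular-neighborhood geometry (Lemma \ref{diffgeom}), and deriving a contradiction with $\int_0^{r_0}\rho\vphi(\rho)\,d\rho=\infty$. That your omission is not a technicality is shown by Theorem \ref{k2}/Corollary 1.5: when $K_1=\emptyset$ (e.g.\ $K$ a single point) and $\vphi(r)=r^\alpha$ with $\alpha<-2$, solutions of \eqref{phi} do exist even though $\int_0^1 r\vphi(r)\,dr=\infty$; so the local-at-zero condition cannot come for free and must use $K_1\neq\emptyset$.

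Two smaller points. In the exterior estimate, Jensen's inequality $\overline{f(u)}\ge f(\bar u)$ requires $f$ convex, which is not assumed; the paper first replaces $f$ by a $C^1$ convex decreasing minorant. Also, $\bar u$ need not be nonincreasing for a positive superharmonic function in an exterior domain (only $r^{N-1}\bar u'$ is nonincreasing); the clean way to get $f(\bar u)\ge c_0>0$ is the paper's substitution $v(\rho)=\bar u(\rho^{1/(2-N)})$, which makes $v$ concave and positive, hence bounded on $(0,\rho_0]$. Your sufficiency sketch (radial Newtonian barrier at infinity, a local piece near $K$, gluing by adding a large multiple of a strictly superharmonic function) is essentially the paper's construction and, with the gluing details supplied as you acknowledge, is fine.
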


If \eq{vphioptim} holds, then we prove that \eq{phi} has a minimal
$C^2$ positive solution $\tilde u$ (in the sense of the usual
order relation) which achieves the equality in \eq{phi} and
$\tilde u$ is a {\it ground-state} of \eq{phi} in the sense that
$\tilde u(x)\ri 0$ as $|x|\ri\infty$. Furthermore, we prove that
all (possible) singularities of $\tilde u$ at isolated points in
$K_2$ are removable and that $\tilde u$ can be continuously
extended on $\partial K_1$. We also determine the rate at
which $\tilde u$ vanishes around the boundary of $K_1$. All these
results are precisely described in the following theorem.

\begin{theorem}\label{thoptim3}
Assume  $(A1)$, $(A2)$, $K_1\neq \emptyset$ and condition
\eq{vphioptim} is satisfied. Then there exists a minimal solution
$\tilde u$ of \eq{phi} that satisfies
$$
\tilde u\in C^2(\RR^N\setminus K)\cap C(\RR^N\setminus int(K_1))
$$
and
\begin{equation}\label{edemc}
\left\{\begin{aligned}
-&\Delta \tilde u=\vphi(\delta_K(x))f(\tilde u), \; \tilde u>0&& \quad\mbox{ in }\RR^N\setminus K,\\
&\tilde u=0 &&\quad\mbox{ on }\partial K_1,\\
&\tilde u>0 &&\quad\mbox{ on }K_2,\\
&\tilde u(x)\ri 0 &&\quad\mbox{ as }|x|\ri\infty.
\end{aligned}\right.
\end{equation}
In addition, there exist positive constants $c_1,c_2,$ and $r_0$
such that $\tilde u$ satisfies
\begin{equation}\label{bestim}
c_1\leq \frac{\tilde u(x)}{H(\delta_{K_1}(x))}\leq c_2 \quad\mbox{
in }\{x\in\RR^N\setminus K:0<\delta_{K_1}(x)<r_0\},
\end{equation}
where  $H:[0,1]\ri [0,\infty)$ is the unique solution of
\begin{equation}\label{oregan}
\left\{\begin{aligned}
-&H''(t)=\vphi(t)f(H(t)), \; H(t)>0&& \quad0<t<1,\\
&H(0)=H(1)=0. &&
\end{aligned}\right.
\end{equation}
\end{theorem}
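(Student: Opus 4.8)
The plan is to build the minimal solution $\tilde u$ by a monotone iteration / barrier argument and then extract the boundary behaviour from the one-dimensional profile $H$. First I would establish existence of the auxiliary function $H$ solving \eq{oregan}: since $\vphi\in C^{0,\gamma}$ is nonincreasing and positive and $f\in C^1(0,\infty)$ is positive and decreasing, a standard shooting or sub/supersolution argument on the interval $(0,1)$ produces a positive concave solution with $H(0)=H(1)=0$; uniqueness follows from concavity together with the monotonicity of $f$ (if $H_1,H_2$ are two solutions, compare them at an interior maximum of $H_1-H_2$ and use $-(H_1-H_2)''=\vphi(t)(f(H_1)-f(H_2))$, whose sign is opposite to that of $H_1-H_2$). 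I would also record the elementary fact that near $t=0$, $H(t)$ is comparable to $t\int_t^{?}\vphi + \text{(lower order)}$ or, more usefully, that $H$ is increasing on $[0,t_*]$ for some $t_*>0$ and $H(t)\asymp t$ is generally false — rather $H(t)/t\to\infty$ is possible — so the precise local shape of $H$ must be kept as is.

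Next I would construct a global supersolution of \eq{phi} on $\RR^N\setminus K$. Using \eq{vphioptim} one forms the radial function $w(x)=A\int_{|x|}^\infty s^{1-N}\int_0^s t^{N-1}\vphi(\mathrm{dist}(t,\ldots))\,dt\,ds$-type Newtonian potential, or more simply a constant multiple of $|x|^{2-N}$ away from $K$, which handles the decay at infinity and the isolated points of $K_2$ (here the $N\ge 3$ hypothesis and the fact that points have zero $W^{1,2}$-capacity give removability). Near $K_1$ I would paste in a multiple of $H(\delta_{K_1}(x))$: since $\partial K_1$ is $C^2$, $\delta_{K_1}$ is $C^2$ in a one-sided neighbourhood and $|\nabla\delta_{K_1}|=1$, $\Delta\delta_{K_1}$ is bounded, so $-\Delta\big(H(\delta_{K_1})\big)=-H''(\delta_{K_1})-H'(\delta_{K_1})\Delta\delta_{K_1}=\vphi(\delta_{K_1})f(H(\delta_{K_1}))-H'(\delta_{K_1})\Delta\delta_{K_1}$, and because $f$ is decreasing and (by concavity) $H'$ is bounded, a large multiplicative constant $c_2$ makes this a supersolution while a small constant $c_1$ makes it a subsolution, which is exactly where the two-sided estimate \eq{bestim} comes from. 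Matching these local pieces into one global ordered pair of sub/supersolutions, with a subsolution that is positive on $K_2$ and vanishes on $\partial K_1$, is the technical core.

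With an ordered pair $(\underline u,\overline u)$ in hand I would run the standard monotone iteration $-\Delta u_{n+1}=\vphi(\delta_K)f(u_n)$ on an exhaustion of $\RR^N\setminus K$ (with boundary data interpolating between $\underline u$ and $\overline u$), obtaining by elliptic regularity and the Harnack inequality a solution $\tilde u\in C^2(\RR^N\setminus K)$ of the \emph{equation} $-\Delta\tilde u=\vphi(\delta_K)f(\tilde u)$ squeezed between $\underline u$ and $\overline u$. Minimality is obtained in the usual way: any $C^2$ positive solution $v$ of the \emph{inequality} \eq{phi} is a supersolution, hence $v\ge u_n$ for all $n$ by the maximum principle applied on each domain of the exhaustion (using that $v$ is bounded below near $K_2$ and near $\partial K_1$ by the corresponding pieces of $\underline u$), so $v\ge\tilde u$. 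The squeeze by $\overline u$ forces $\tilde u(x)\to 0$ as $|x|\to\infty$ and the removability of the $K_2$-singularities; the squeeze by $c_1 H(\delta_{K_1})$ and $c_2 H(\delta_{K_1})$ gives both $\tilde u\in C(\RR^N\setminus\mathrm{int}(K_1))$ with $\tilde u=0$ on $\partial K_1$ and estimate \eq{bestim}.

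The main obstacle I anticipate is the construction and gluing of the subsolution near $\partial K_1$: one needs $-\Delta\big(c_1 H(\delta_{K_1})\big)\le\vphi(\delta_K)f(c_1 H(\delta_{K_1}))$ to hold, and since $\delta_K\le\delta_{K_1}$ the right side is $\ge\vphi(\delta_{K_1})f(c_1 H(\delta_{K_1}))$, but one must absorb the curvature term $c_1 H'(\delta_{K_1})\Delta\delta_{K_1}$, whose sign is unfavourable where $\partial K_1$ is concave from outside; controlling this requires knowing that $H'(t)\to 0$ slowly enough / that $\vphi(t)f(H(t))$ dominates as $t\to 0$, which in turn uses \eq{oregan} itself ($-H''=\vphi f(H)$, so $H'(t)=\int_t^{?}\vphi f(H)$ is indeed small near a zero of $H'$, but near $t=0$, $H'(0^+)$ may be $+\infty$). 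Handling the interplay of the two scales — the $|x|^{2-N}$ decay at infinity and the $H(\delta_{K_1})$ behaviour at the boundary — in a single monotone scheme, together with the possibly non-integrable $\vphi$ right at $t=0$ (only $\int_0^\infty r\vphi(r)\,dr<\infty$ is assumed, which permits $\vphi(0^+)=+\infty$), is where the delicate estimates will concentrate.
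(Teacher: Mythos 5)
Your overall architecture (exhaustion of $\RR^N\setminus K$, comparison principle for the decreasing nonlinearity, barrier built from the one--dimensional profile $H$) is the same as the paper's, but two steps as you present them do not go through. The first is the construction of the sub/supersolution near $\partial K_1$ as constant multiples of $H(\delta_{K_1}(x))$. Your justification rests on the claim that ``(by concavity) $H'$ is bounded,'' which is false: concavity only gives that $H'$ is decreasing, and $H'(0^+)=+\infty$ is perfectly possible (you concede this yourself two paragraphs later, leaving the curvature term $H'(\delta_{K_1})\Delta\delta_{K_1}$ unabsorbed). Multiplying by a large or small constant does not help by itself, because the bad first--order term scales with the same constant as the good zeroth--order term. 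The paper sidesteps this entirely in Lemma \ref{comp2} by taking $\overline u=MH(ce_1)$ with $e_1$ the first Dirichlet eigenfunction and $c$ so small that $ce_1\le\min\{a,\delta_\Omega\}$ with $H'>0$ on $(0,a]$: then the first--order term $Mc\lambda_1 e_1H'(ce_1)$ has the \emph{favorable} sign everywhere, the set where $|\nabla e_1|$ degenerates is compactly contained in $\Omega$ and is handled by taking $M$ large, and the two--sided estimate \eq{bestim} follows from $C_1\delta_\Omega\le e_1\le C_2\delta_\Omega$ and the monotonicity of $H(t)/t$. If you insist on composing $H$ with the distance function directly, you must at least observe that $t\mapsto\vphi(t)f(H(t))=-H''(t)$ is nonincreasing near $0$ and therefore dominates $\int_t^{t_1}\vphi f(H)\,ds\ge H'(t)-H'(t_1)$ up to a factor $t_1$; you state the difficulty but do not resolve it.

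The second gap is at the points of $K_2$. The theorem asserts $\tilde u\in C(\RR^N\setminus \mathrm{int}(K_1))$ with $\tilde u>0$ on $K_2$, i.e.\ a genuine continuous positive extension across each isolated singularity, and ``points have zero $W^{1,2}$-capacity'' does not deliver this: the right--hand side $\vphi(\delta_K(x))f(\tilde u)$ may blow up at $a\in K_2$ since $\vphi(0^+)=+\infty$ is allowed under \eq{vphioptim}. The paper first gets a positive lower bound for $\tilde u$ near $a$ from superharmonicity (Lemma \ref{ls2}), so that $f(\tilde u)$ is bounded there, and then uses the Newtonian potential of $\vphi(|y|)f(\tilde u(y))$ together with $\int_0^1 r\vphi(r)\,dr<\infty$ and the explicit surface averages of Lemma \ref{ls1} to show (Lemma \ref{ls3}) that the potential is continuous at $a$ and hence that $\lim_{x\to a}\tilde u(x)$ exists and is positive. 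Finally, a smaller point: for decreasing $f$ the iteration $-\Delta u_{n+1}=\vphi(\delta_K)f(u_n)$ is order--\emph{reversing}, not monotone; the paper instead solves the exact nonlinear Dirichlet problem on each exhausting domain (Lemmas \ref{comp3}--\ref{comp4}, themselves built on the regularized problems of Lemma \ref{crt}) and obtains monotonicity in $n$ from the inclusion of the domains.
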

The existence of a solution to \eq{oregan} follows from
\cite[Theorem 2.1]{aga}. By Theorem \ref{thoptim2} we have that
condition \eq{vphioptim} is both necessary and sufficient for the
existence of a solution to \eq{phi}. If that is the case, the
minimal solution $\tilde u$ of \eq{phi} can be continuously
extended to $\partial K$, so that all isolated singularities of
$\tilde u$ at $K_2$ are removable. If $\vphi(r)=r^{\alpha}$ and
$f(u)=u^{-p}$, $p>0$, the behavior of $H$ in \eq{oregan} was
studied in  \cite[Theorem 3.5]{gr2}. In this case we have:

\begin{corollary}\label{cor1}
Assume $(A2)$, $K_1\neq \emptyset$, $f(u)=u^{-p},p>0,$ and
$$
\vphi(r)\sim r^{\alpha}\quad\mbox{ as }r\ri 0\quad \mbox{ and
}\quad \vphi(r)\sim r^{\beta}\quad\mbox{ as }r\ri \infty\,,
$$
for some $\alpha, \beta<0$. Then \eq{phi} has solutions if and
only if $0>\alpha>-2>\beta$. In this case \eq{phi} has a minimal
solution $\tilde u$ which satisfies \eq{edemc} and there exist
positive constants $c_1,c_2,$ and $r_0$ such that $\tilde u$
satisfies \eq{bestim} where
$$H(t)=
\left\{\begin{aligned}
&t&&\quad\mbox{ if } p-\alpha<1,\\
&t\left(\log\frac{1}{t}\right)^{\frac{1}{2+\alpha}}&&\quad\mbox{ if } p-\alpha=1,\\
&t^{\frac{2+\alpha}{1+p}}&&\quad\mbox{ if } p-\alpha>1.
\end{aligned}\right.
$$
\end{corollary}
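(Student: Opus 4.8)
The plan is to read off the existence dichotomy from Theorem~\ref{thoptim2}, then quote Theorem~\ref{thoptim3} for the minimal solution and its boundary estimate, and finally analyze the ordinary differential equation \eq{oregan} to make the profile $H$ explicit.

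First I would check the standing hypotheses: $f(u)=u^{-p}$ with $p>0$ is positive, decreasing and of class $C^1$ on $(0,\infty)$, so $(A1)$ holds, while $(A2)$ is assumed. By Theorem~\ref{thoptim2}, inequality \eq{phi} has a $C^2$ positive solution if and only if $\int_0^\infty r\vphi(r)\,dr<\infty$. Splitting this integral at $r=1$ and using that $\vphi(r)$ is comparable to $r^\alpha$ for $r$ near $0$ and to $r^\beta$ for $r$ near $\infty$, one has $\int_0^1 r\vphi(r)\,dr<\infty\iff\int_0^1 r^{1+\alpha}\,dr<\infty\iff\alpha>-2$, and $\int_1^\infty r\vphi(r)\,dr<\infty\iff\int_1^\infty r^{1+\beta}\,dr<\infty\iff\beta<-2$. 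Combined with the assumptions $\alpha<0$ and $\beta<0$, finiteness of $\int_0^\infty r\vphi(r)\,dr$ is therefore equivalent to $0>\alpha>-2>\beta$, which proves the first assertion. When this condition holds, Theorem~\ref{thoptim3} immediately furnishes the minimal solution $\tilde u$ of \eq{phi} satisfying \eq{edemc} and the two-sided bound \eq{bestim}, with $H$ the solution of \eq{oregan} provided by \cite[Theorem~2.1]{aga}.

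It then remains to identify the behavior of $H(t)$ as $t\ri 0^+$, since \eq{bestim} only concerns the region $\{0<\delta_{K_1}(x)<r_0\}$ and $r_0$ may be shrunk at will; replacing $H$ by its leading-order profile near $0$ only alters the constants $c_1,c_2$. Near $t=0$ equation \eq{oregan} reads $-H''(t)=\vphi(t)H(t)^{-p}$ with $\vphi(t)$ comparable to $t^\alpha$, and the behavior of positive solutions of $-H''=t^\alpha H^{-p}$ on a half-neighborhood of $0$ is exactly what \cite[Theorem~3.5]{gr2} describes. The trichotomy comes from the scaling ansatz $H(t)=t^\sigma$, which forces $\sigma(1+p)=2+\alpha$: the value $\sigma=(2+\alpha)/(1+p)$ lies in $(0,1)$ precisely when $p-\alpha>1$, yielding $H(t)\asymp t^{(2+\alpha)/(1+p)}$; when $p-\alpha<1$ the right-hand side is integrable enough near $0$ that $H$ vanishes linearly, $H(t)\asymp t$; and the critical case $p-\alpha=1$ (so $1+p=2+\alpha$) produces the logarithmic correction $H(t)\asymp t(\log(1/t))^{1/(2+\alpha)}$, as one checks by inserting $H(t)=t(\log(1/t))^q$ and matching leading terms to get $q(1+p)=1$. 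Feeding these profiles into \eq{bestim} gives the stated formula for $H$ and finishes the proof.

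The main obstacle is the passage from the mere asymptotic comparability $\vphi(t)\sim t^\alpha$ to genuine two-sided bounds on $H$ near $0$: this requires sandwiching $H$ between solutions of the model equations $-v''=c\,t^\alpha v^{-p}$ for suitable positive constants $c$ by a sub/supersolution comparison for the singular problem \eq{oregan}, and verifying that \cite[Theorem~3.5]{gr2} is stable under such perturbations of the weight (equivalently, applying it to the two model problems and interpolating). Once that is in place, the remaining steps — translating the estimate \eq{bestim} into the explicit rates — are routine bookkeeping.
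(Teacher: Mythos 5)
Your argument is correct and follows essentially the same route as the paper: the existence dichotomy is read off from Theorem~\ref{thoptim2} by splitting $\int_0^\infty r\vphi(r)\,dr$ at $r=1$, and the minimal solution with estimate \eq{bestim} comes from Theorem~\ref{thoptim3}. The only difference is that where you rederive the trichotomy for $H$ near $t=0$ (and correctly flag the need to pass from $\vphi(t)\sim t^\alpha$ to two-sided bounds via sub/supersolutions), the paper simply cites \cite[Theorem 3.5]{gr2} for the behavior of $H$.
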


We are next concerned with the degenerate case $K_1=\emptyset$. In
this setting the existence of a solution to \eq{phi} depends on
both $\vphi$ and $f$. Our result in this case is:

\begin{theorem}\label{k2}
Assume $(A1)$, $K_1=\emptyset$ and that $\vphi \in
C^{0,\gamma}(0,\infty)$ ($0<\gamma<1$) is a positive function
which is nonincreasing in a neighborhood of zero and of infinity.
Then, \eq{phi} has solutions if and only if
\begin{equation}\label{eqp1}
\int_1^\infty r\vphi(r) dr<\infty
\end{equation}
and there exists $a>0$ such that
\begin{equation}\label{eqp2}
\int_0^1r^{N-1}\vphi(r) f(ar^{2-N}) dr<\infty.
\end{equation}
Furthermore, if \eq{eqp1}-\eq{eqp2} hold, then \eq{phi} has a
minimal solution $\tilde u$ which satisfies
\begin{equation}\label{mins}
\left\{\begin{aligned}
-&\Delta \tilde u=\vphi(\delta_K(x))f(\tilde u), \; \tilde u>0&& \quad\mbox{ in }\RR^N\setminus K,\\
&\tilde u(x)\ri 0 &&\quad\mbox{ as }|x|\ri\infty.
\end{aligned}\right.
\end{equation}
In addition, $\tilde u$ has removable singularities at $K$ if and
only if $\int_0^1 r\vphi(r)dr<\infty$.
\end{theorem}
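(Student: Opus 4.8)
The plan is to prove the two implications by essentially separate arguments, the common tools being spherical averages, Harnack's inequality on dyadic annuli, and (for the ``if'' part) Newtonian potentials.

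\emph{Necessity of \eq{eqp1} and \eq{eqp2}.} Let $u$ be a positive $C^2$ solution of \eq{phi} and fix $R_0$ with $K\subset B_{R_0}$. I would first look at the spherical average $w(r):=\abs{\partial B_r}^{-1}\int_{\partial B_r}u$, which is positive and radially superharmonic on $(R_0,\infty)$ because $-\Delta u\ge\vphi(\delta_K)f(u)\ge0$; hence $r^{N-1}w'(r)$ is nonincreasing, an elementary discussion of its sign shows $w$ is bounded above, and Harnack on $\{r/2<\abs{x}<2r\}$ upgrades this to $u\le M$ on $\{\abs{x}\ge2R_0\}$. Then $f(u)\ge f(M)>0$ there, so $-(r^{N-1}w')'\ge f(M)\,r^{N-1}\vphi(r+R_0)$; integrating and using that $w$ is positive and bounded forces $\int^\infty r\vphi(r)\,dr<\infty$, i.e. \eq{eqp1}. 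For \eq{eqp2}, fix an isolated point $p\in K$ and $\rho>0$ with $\overline{B_{2\rho}(p)}\cap K=\{p\}$, so that $\delta_K=\abs{\cdot-p}$ there, and set $v(r):=\abs{\partial B_r}^{-1}\int_{\abs{x-p}=r}u$. A monotonicity argument on $g(r):=r^{N-1}v'(r)$ gives $v(r)\le Cr^{2-N}$ for small $r$, hence (Harnack again) $u(x)\le A\abs{x-p}^{2-N}$ near $p$; then $-(r^{N-1}v')'\ge r^{N-1}\vphi(r)f(Ar^{2-N})$, and if $\int_0^1 r^{N-1}\vphi(r)f(Ar^{2-N})\,dr=\infty$ integration would force $g(r)\to+\infty$ as $r\to0^+$, whence $v(r)\to-\infty$, contradicting $v>0$. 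Thus \eq{eqp2} holds with $a=A$.

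\emph{Sufficiency and the minimal solution.} Assume \eq{eqp1}--\eq{eqp2}, let $a>0$ be as in \eq{eqp2}, write $K=\{p_1,\dots,p_m\}$, and for $\ep>0$ set $\underline u_\ep(x):=\ep+\sum_{j}a\abs{x-p_j}^{2-N}$ and $\bar u_\ep:=\underline u_\ep+\mathcal N[\vphi(\delta_K)f(\underline u_\ep)]$, where $\mathcal N$ is the Newtonian potential on $\RR^N$. A region-splitting estimate — using $\delta_K\ge\abs{x-p_j}$ near $p_j$ with \eq{eqp2}, and $\delta_K\ge\abs{x}-R_0$ near infinity with \eq{eqp1} — shows the density is locally Hölder and has a finite, decaying potential on $\RR^N\setminus K$, so $\bar u_\ep\in C^2(\RR^N\setminus K)$ is finite and positive with $\bar u_\ep(x)\to\ep$ as $\abs{x}\to\infty$; since $\underline u_\ep$ is harmonic off $K$ and $\bar u_\ep\ge\underline u_\ep$ (so $f(\bar u_\ep)\le f(\underline u_\ep)$), we get $-\Delta\bar u_\ep=\vphi(\delta_K)f(\underline u_\ep)\ge\vphi(\delta_K)f(\bar u_\ep)$, i.e. $\bar u_\ep$ solves \eq{phi}. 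In particular the set $S$ of positive solutions of \eq{phi} is nonempty. I would then take $\tilde u:=\inf S$ and verify: (i) $\tilde u>0$ on $\RR^N\setminus K$, by a Harnack-type argument — if $u_k\in S$ and $u_k(x_0)\to0$ then $u_k\to0$ uniformly near $x_0$, so $-\Delta u_k\ge c_0 f(u_k)$ is bounded below by a fixed positive constant on a fixed ball, and the Green representation then forces $u_k(x_0)$ bounded below, a contradiction; (ii) $\tilde u$ is a $C^2$ solution of $-\Delta\tilde u=\vphi(\delta_K)f(\tilde u)$, because $S$ (enlarged, if needed, to distributional supersolutions) is stable under pairwise minima and under ``Dirichlet improvement'' on balls $B\subset\subset\RR^N\setminus K$ — replacing a member $u$ on $B$ by the minimal solution of the equation with datum $u|_{\partial B}$ yields a member of $S$ no larger than $u$ that solves the equation on $B$ — so a decreasing minimizing sequence of such improved functions, being uniformly bounded on both sides on compacta, converges in $C^2_{\rm loc}$ by interior Schauder estimates; (iii) $\tilde u(x)\to0$ as $\abs{x}\to\infty$, since $\tilde u\le\bar u_\ep$ for every $\ep>0$, so $\limsup_{\abs{x}\to\infty}\tilde u\le\ep$. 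This gives \eq{mins}, and $\tilde u=\inf S$ is minimal. All the comparisons above use the comparison principle for $-\Delta z=g(x,z)$ with $g(x,\cdot)$ nonincreasing, which holds by the monotone-operator/energy argument.

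\emph{Removable singularities.} If $\int_0^1 r\vphi(r)\,dr<\infty$, then near each $p_j$ the function $c_0+\mathcal N_{B_\rho(p_j)}[\vphi(\abs{x-p_j})f(c_0)]$ is, by the same estimate (now with $\int_0^\rho r\vphi(r)\,dr<\infty$), a \emph{bounded} positive supersolution; taking $c_0>\max_{\partial B_\rho(p_j)}\bar u_\ep$ and replacing $\bar u_\ep$ on $B_\rho(p_j)$ by the smaller of the two functions (which equals $\bar u_\ep$ near $\partial B_\rho(p_j)$) produces a member of $S$ still tending to $\ep$ at infinity but now bounded near every $p_j$; minimality then makes $\tilde u$ bounded near each $p_j$, and a bounded superharmonic function whose Laplacian has a locally integrable density extends continuously across $p_j$. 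Conversely, if $\int_0^1 r\vphi(r)\,dr=\infty$ and $\tilde u\le M$ near some $p_j$, then $\tilde u$ extends across $p_j$ and is there the local Newtonian potential of $\vphi(\abs{x-p_j})f(\tilde u)$ plus a harmonic function, which forces $\int_{B_\rho(p_j)}\abs{y-p_j}^{2-N}\vphi(\abs{y-p_j})f(\tilde u(y))\,dy<\infty$; but that integral is $\ge c\,f(M)\int_0^\rho r\vphi(r)\,dr=\infty$, a contradiction. The most delicate step is (ii) — showing rigorously that $\inf S$ is a genuine positive $C^2$ solution of the \emph{equation} — which rests on the anti-cooperative comparison principle, the stability of $S$ under Dirichlet improvement, and an elliptic bootstrap; checking that the potentials $\mathcal N[\vphi(\delta_K)f(\underline u_\ep)]$ are finite and decay (the only place \eq{eqp1}--\eq{eqp2} enter the sufficiency) is the other technical hotspot, while the necessity and the removability dichotomy are comparatively routine once the right averaged quantities are isolated.
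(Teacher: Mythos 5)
Your overall strategy matches the paper's (spherical averages for the necessity, a harmonic seed $a\abs{x-p}^{2-N}+b$ corrected by a Newtonian potential for the sufficiency, comparison for minimality, and a Riesz--representation dichotomy for removability), and your Perron-type construction of $\tilde u=\inf S$ is a workable, if heavier, alternative to the paper's exhaustion by the domains $B_{R+n}(0)\setminus\bigcup_{a\in K}\overline B_{1/n}(a)$. There is, however, a genuine gap in your necessity argument: twice you invoke ``Harnack'' to upgrade a bound on spherical averages of $u$ into a pointwise upper bound (first $u\le M$ outside a large ball, then $u(x)\le A\abs{x-p}^{2-N}$ near an isolated point $p$). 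A solution of \eq{phi} is only a supersolution, $-\Delta u\ge \vphi(\delta_K(x))f(u)\ge 0$, and for nonnegative supersolutions only the \emph{weak} Harnack inequality is available: it bounds $\inf u$ from below by an $L^p$ average, never $\sup u$ from above. Indeed a positive superharmonic function in a punctured ball can have spherical averages $O(r^{2-N})$ about $p$ while its supremum on $\partial B_r(p)$ is far larger (sum mollified fundamental solutions concentrated at points accumulating at $p$). Since both of your integral estimates rest on the pointwise lower bounds $f(u)\ge f(M)$ and $f(u(x))\ge f(A\abs{x-p}^{2-N})$, the necessity of \eq{eqp1} and \eq{eqp2} is not established as written.

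The repair is the one the paper uses: first replace $f$ by a $C^1$ minorant $g<f$ with $g'$ negative and nondecreasing, so that $g$ is decreasing \emph{and convex}; Jensen's inequality on each sphere then bounds the average of $g(u)$ from below by $g(\bar u)$, so only the spherical average $\bar u$ --- which you have already controlled by the concavity of $v(\rho)=\bar u(\rho^{1/(2-N)})$ --- ever needs to be bounded. (For \eq{eqp2} the paper runs this after a Kelvin transform centred at $p$, but your direct computation on spheres about $p$ works equally well once the Jensen step replaces the Harnack step.) With that modification the remainder of your proposal --- the finiteness and decay of the Newtonian potential of $\vphi(\delta_K)f(\underline u_\ep)$ under \eq{eqp1}--\eq{eqp2}, the positivity and $C^2$ regularity of $\inf S$, and both halves of the removability dichotomy --- goes through.
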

\noindent From Theorem \ref{thoptim2} and Theorem \ref{k2} we have
the following result regarding the inequality
\begin{equation}\label{phi1}
-\Delta u\geq \delta^{\alpha}_K(x) u^{-p} \quad\mbox{ in
}\RR^N\setminus K,\quad \alpha<0<p.
\end{equation}

\begin{corollary}
Assume $K$ has finitely many components and $K=K_1\cup K_2$ where 
$K_1$ is the union of all components of $K$
which are the closure of a $C^2$ domain and $K_2$ is the set of
all isolated points of $K$.
\begin{itemize}
\item[(i)] If $K_1$ is nonempty, then \eq{phi1} has no positive
$C^2$ solutions; \item[(ii)] If $K_1=\emptyset$ then \eq{phi1} has
solutions if and only if
\begin{equation}\label{aabbb}
N+\alpha+p(N-2)>0\quad\mbox{ and }\quad \alpha<-2,
\end{equation}
and all solutions of \eq{phi1} are singular at points of $K_2$.
\end{itemize}
\end{corollary}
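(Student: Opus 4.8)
The plan is to obtain both parts of the corollary by specializing Theorems~\ref{thoptim2} and~\ref{k2} to the model nonlinearities $\vphi(r)=r^{\alpha}$ and $f(u)=u^{-p}$, so the first task is to check that these choices satisfy the standing hypotheses. Since $p>0$, the function $f(u)=u^{-p}$ belongs to $C^1(0,\infty)$ and is positive and decreasing, so $(A1)$ holds. Since $\alpha<0$, the function $\vphi(r)=r^{\alpha}$ is positive, smooth on $(0,\infty)$ (hence locally Hölder continuous, so in $C^{0,\gamma}(0,\infty)$ for any $0<\gamma<1$), and strictly decreasing on all of $(0,\infty)$; in particular it is nonincreasing near $0$ and near $\infty$, which is all Theorem~\ref{k2} requires. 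Thus the hypotheses of the two theorems are met in each case.

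For part (i), assume $K_1\neq\emptyset$. By Theorem~\ref{thoptim2}, \eqref{phi1} admits a $C^2$ positive solution if and only if $\int_0^\infty r\vphi(r)\,dr=\int_0^\infty r^{1+\alpha}\,dr<\infty$. Splitting the integral at $r=1$, the contribution over $(0,1)$ is finite exactly when $\alpha>-2$ and the contribution over $(1,\infty)$ is finite exactly when $\alpha<-2$; these conditions are incompatible, so $\int_0^\infty r^{1+\alpha}\,dr=+\infty$ for every $\alpha<0$. Hence \eqref{vphioptim} never holds and \eqref{phi1} has no positive $C^2$ solution.

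For part (ii), assume $K_1=\emptyset$, so $K=K_2$. By Theorem~\ref{k2}, \eqref{phi1} has solutions if and only if \eqref{eqp1} and \eqref{eqp2} hold. Condition \eqref{eqp1} reads $\int_1^\infty r^{1+\alpha}\,dr<\infty$, i.e. $\alpha<-2$. For \eqref{eqp2}, note $f(ar^{2-N})=a^{-p}r^{p(N-2)}$, so the integrand is $a^{-p}\,r^{\,N-1+\alpha+p(N-2)}$; since $a^{-p}$ is a positive constant, \eqref{eqp2} holds for some $a>0$ (equivalently for all $a>0$) precisely when $N-1+\alpha+p(N-2)>-1$, that is, $N+\alpha+p(N-2)>0$. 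Combining the two conditions gives exactly \eqref{aabbb}, which establishes the existence/nonexistence dichotomy, and in that regime Theorem~\ref{k2} produces a minimal solution $\tilde u$ satisfying \eqref{mins}.

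Finally, to see that under \eqref{aabbb} every solution is singular at the points of $K_2$: by Theorem~\ref{k2}, $\tilde u$ has removable singularities at $K$ if and only if $\int_0^1 r\vphi(r)\,dr=\int_0^1 r^{1+\alpha}\,dr<\infty$, i.e. $\alpha>-2$. But \eqref{aabbb} forces $\alpha<-2$, so $\tilde u$ has a non-removable singularity — in fact is unbounded — near each point of $K_2$. Since $\tilde u$ is the minimal positive $C^2$ solution, any other positive $C^2$ solution $u$ of \eqref{phi1} satisfies $u\geq\tilde u$ in $\RR^N\setminus K$, hence $u$ is also unbounded near every point of $K_2$ and its singularities there are not removable. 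The only point that is not a verbatim application of the two cited theorems is this last comparison step, and it is immediate from the minimality of $\tilde u$; so there is no real obstacle, the content of the corollary being essentially bookkeeping on the two integral conditions.
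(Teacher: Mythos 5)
Your proposal is correct and follows essentially the same route as the paper, which presents this corollary as a direct consequence of Theorems \ref{thoptim2} and \ref{k2}: the verification reduces to the elementary computation of the integrals $\int r^{1+\alpha}\,dr$ and $\int_0^1 a^{-p}r^{N-1+\alpha+p(N-2)}\,dr$, exactly as you do. Your final step — passing from the non-removability (indeed unboundedness, by the averaging argument in the proof of Theorem \ref{k2}) of the minimal solution $\tilde u$ to the singularity of every solution via $u\geq\tilde u$ — is the right way to upgrade the theorem's statement about $\tilde u$ to the corollary's claim about all solutions.
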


\noindent Finally, we consider the special case $K=\{0\}$ and
describe the solution set of
\begin{equation}\label{rad}
-\Delta u= \vphi(|x|)f(u)  \quad\mbox{ in }\RR^N\setminus\{0\}.
\end{equation}

For a large class of functions $\vphi$, we show that any $C^2$
positive solution of \eq{rad} (if exists) is radially symmetric.
Furthermore, the solution set of \eq{rad} consists of a
two-parameter family of radially symmetric functions.

\begin{theorem}\label{thdeg2} Suppose that $f$ and $\vphi$ are as in Theorem \ref{k2} and that $\vphi$
satisfies \eq{eqp1}-\eq{eqp2} for all $a>0$. Then :
\begin{enumerate}
\item[(i)] for any $a,b\geq 0$ there exists a radially symmetric
positive solution $u_{a,b}$ of \eq{rad} such that
\begin{equation}\label{lim}
\lim_{|x|\ri 0}|x|^{N-2}u_{a,b}(x)=a\quad\mbox{ and }\quad
\lim_{|x|\ri \infty}u_{a,b}(x)=b.
\end{equation}
\item[(ii)] the set of positive solutions of equation \eq{rad}
consists only of $\{u_{a,b}: \, a,b\geq 0\}$. In particular, any
$C^2$ positive solution of \eq{rad} is radially symmetric.
\end{enumerate}
\end{theorem}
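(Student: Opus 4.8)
The plan is to reduce everything to ODE analysis for the radial profile. For part (i), fix $a,b\ge 0$. A radial solution $u=u(r)$, $r=|x|$, of $-\Delta u=\vphi(r)f(u)$ on $(0,\infty)$ satisfies $-(r^{N-1}u')'=r^{N-1}\vphi(r)f(u)$, which integrates to an integral equation. The natural ansatz is to seek $u_{a,b}$ as a solution of
\begin{equation}\label{intansatz}
u(r)=a\,r^{2-N}+b+\frac{1}{N-2}\int_r^\infty\!\Big(s^{2-N}-\big(\max(r,s)\big)^{2-N}\Big)\,s^{N-1}\vphi(s)f(u(s))\,ds,
\end{equation}
so that the two prescribed constants arise as the coefficient of the fundamental solution $r^{2-N}$ near $0$ and as the limit at $\infty$. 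I would build a solution of \eqref{intansatz} by monotone iteration: start from the supersolution-free lower bound $\underline u(r)=a\,r^{2-N}+b$ (when $a=b=0$ one perturbs to $a'>0$ and passes to the limit at the end, which is exactly why hypotheses \eqref{eqp1}--\eqref{eqp2} are assumed for all $a>0$), and iterate the (monotone decreasing, since $f$ is decreasing) integral operator. The convergence of the defining integrals is guaranteed precisely by \eqref{eqp1} near infinity and by \eqref{eqp2} (applied with the relevant $a$) near zero, since $f$ is decreasing and $u(s)\ge a's^{2-N}$ there; one also needs an a priori upper bound to keep the iterates from blowing up, which comes from comparison with the solution of the same equation with a slightly larger constant $a''>a'$. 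Standard elliptic regularity ($\vphi\in C^{0,\gamma}$, $f\in C^1$) upgrades the fixed point to a $C^2$ classical solution, and \eqref{lim} follows by dominated convergence in \eqref{intansatz} as $r\to0$ and $r\to\infty$.

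For part (ii), the two things to prove are that every positive $C^2$ solution is radial, and that a radial solution is determined by the pair $(a,b)$. Radial symmetry is the heart of the matter. Since $f$ is decreasing, the nonlinearity $\vphi(|x|)f(u)$ is \emph{nonincreasing} in $u$, which is the favorable monotonicity for a comparison/sliding argument rather than the usual moving-plane setup; I expect to adapt the asymptotic-radial-symmetry technique from \cite{tal99,tal01,tal02} used for $af(u)\le-\Delta u\le bf(u)$ near an isolated singularity. Concretely: given a positive solution $u$, spherical averaging $\bar u(r):=\fint_{|x|=r}u$ together with Jensen's inequality (valid because $f$ is convex-free — here one uses only that $t\mapsto f(t)$ is decreasing, so $\fint f(u)\le\cdots$ needs care; more robustly, use $\bar u$ versus the radial solution with matching boundary data on annuli) produces a radial supersolution/subsolution sandwich forcing $u\equiv\bar u$. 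A cleaner route, which I would try first, is: show any positive solution has the expansions $u(x)=a|x|^{2-N}(1+o(1))$ as $x\to0$ and $u(x)\to b$ as $|x|\to\infty$ for some $a,b\ge0$ (Bôcher-type analysis at the isolated singularity, using that $-\Delta u\ge0$ so $u$ is superharmonic and $\liminf |x|^{N-2}u$ exists; the removable-singularity dichotomy from Theorem \ref{k2} distinguishes $a>0$ from $a=0$), then apply the comparison principle for the operator $-\Delta-[\text{nonincreasing in }u]$ on the punctured space to conclude $u\le u_{a,b}$ and $u\ge u_{a,b}$, hence $u=u_{a,b}$. The comparison principle is where the decreasing nature of $f$ pays off: if $w=u-u_{a,b}$ attains a positive interior max then at that point $-\Delta w\le0$ while $-\Delta w=\vphi(f(u)-f(u_{a,b}))\ge0$ with the sign strict unless $u=u_{a,b}$ there, and the boundary/asymptotic data kill the max at $0$ and at $\infty$; a Phragmén–Lindelöf argument handles the behavior at infinity in $\RR^N$.

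The main obstacle is the symmetry step in (ii): unlike the bounded-domain or moving-plane situation, here the domain is all of $\RR^N\setminus\{0\}$, the solution may be singular at $0$, and we have an inequality-flavored structure only in the comparison. The crux is therefore to pin down the precise asymptotic behavior of an \emph{arbitrary} positive $C^2$ solution at both ends — showing the limit $b=\lim_{|x|\to\infty}u$ exists (not merely $\liminf$) and that the singular profile at $0$ is exactly $a|x|^{2-N}$ with no anisotropic correction — because once those two numbers $a,b$ are extracted, the uniqueness of \eqref{intansatz} with given $(a,b)$ (again from monotone iteration: two solutions with the same data are squeezed together) finishes both claims simultaneously. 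I expect the infinity asymptotics to follow from $\int_1^\infty r\vphi\,f(u)<\infty$ (so the Newtonian potential of the right-hand side is bounded and tends to $0$) plus superharmonicity, and the origin asymptotics from a Harnack-type argument combined with \eqref{eqp2}.
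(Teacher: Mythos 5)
Your overall strategy for (ii) is essentially the paper's: establish that an arbitrary solution has the exact asymptotics $u=a|x|^{2-N}+b+o(|x|^{2-N})$ at $0$ and $u\to b$ at $\infty$ (the paper gets this from a Brezis--Lions argument giving $\Delta u+\vphi f(u)+a\delta_0=0$ in $\mathcal D'$, a representation formula $u=a|x|^{2-N}+b+c_N\,\Gamma*(\vphi f(u))$, and the sandwich $a|x|^{2-N}+b\le u\le a|x|^{2-N}+b+\xi$ obtained by comparing $f(u)\le f(\xi)$ with the minimal solution $\xi$), and then conclude $u\equiv u_{a,b}$ by comparison; radial symmetry is then automatic because the $u_{a,b}$ are built radially. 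However, there are two genuine gaps.

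First, in (i): iterating your integral operator $T$ starting from $a r^{2-N}+b$ does \emph{not} converge monotonically to a fixed point, because $f$ decreasing makes $T$ order-\emph{reversing}. The iterates interlace, $u_0\le u_2\le\cdots\le u_{2n+1}\le\cdots\le u_1$, and the even/odd subsequences converge to a pair $u\le v$ satisfying only the coupled system $u=Tv$, $v=Tu$. This is exactly why the paper's Lemma 5.1 produces only a \emph{super}solution $v_{a,b}$ by this iteration, not a solution. To get an actual solution with the prescribed data you need a separate existence mechanism; the paper solves exact Dirichlet problems on the annuli $B_n(0)\setminus\overline B_{1/n}(0)$ with boundary data $a|x|^{2-N}+b+\xi(x)$ (radial by uniqueness), squeezes them between the subsolution $a|x|^{2-N}+b$ and the supersolution $a|x|^{2-N}+b+\xi$, and passes to the limit. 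Your proposal needs to either adopt such an exhaustion or prove $u=v$ for the coupled system, which is not automatic. (Also, as written your kernel $s^{2-N}-\max(r,s)^{2-N}$ vanishes identically on the range $s\ge r$ of your integral; the correct kernel coming from $\int_r^\infty t^{1-N}\int_0^t s^{N-1}\,ds\,dt$ is $\frac1{N-2}\max(r,s)^{2-N}s^{N-1}$ integrated over $s\in(0,\infty)$.)

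Second, in the comparison step of (ii): knowing that $|x|^{N-2}u$ and $|x|^{N-2}u_{a,b}$ have the \emph{same} limit $a$ at the origin does not give the one-sided boundary condition $\limsup_{x\to0}(u_{a,b}-u)\le0$ required by the comparison lemma --- the difference of two functions with identical leading singular coefficient can be bounded and of either sign. The standard repair, which the paper uses, is to compare $u_{a,b}$ with $u_\varepsilon:=u+\varepsilon(|x|^{2-N}+1)$: then $u_\varepsilon-u_{a,b}\to+\infty$ at $0$ and $\to\varepsilon>0$ at $\infty$, so the comparison lemma applies on $\RR^N\setminus\{0\}$, and letting $\varepsilon\searrow0$ gives $u\ge u_{a,b}$; the reverse inequality is symmetric. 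Your ``Phragm\'en--Lindel\"of'' remark gestures at this but the $\varepsilon$-perturbation must be made explicit for the argument to close.
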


We point out that if $N=2$ then \eq{rad} has no $C^2$ positive
solutions.  More precisely, if $u\in C^2(\RR^2\setminus\{0\})$
satisfies $-\Delta u\geq 0$, $u\geq 0$ in $\RR^2\setminus\{0\}$,
then $u$ is constant (see \cite[Theorem 29, page 130]{protter}). A
direct consequence of Theorem \ref{thdeg2} is the following:

\begin{corollary}\label{cor2} Let $\alpha\in\RR$, $p>0$. Then, the equation
\begin{equation}\label{e1}
-\Delta u= |x|^\alpha u^{-p} \quad\mbox{ in
}\RR^N\setminus\{0\},\, N\geq 3,
\end{equation}
has positive solutions if and only if \eq{aabbb} holds. In this
case, we have the same conclusion as in Theorem \ref{thdeg2} and
the function
\begin{equation}\label{xii}
\xi(x):=\left[\frac{-(1+p)^2}{(\alpha+2)( p(N-2)+N+\alpha)}
\right]^{1/(1+p)}|x|^{(2+\alpha)/(1+p)},\quad
x\in\RR^N\setminus\{0\},
\end{equation}
is the minimal solution of \eq{e1}.
\end{corollary}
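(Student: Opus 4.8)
The plan is to deduce Corollary~\ref{cor2} from Theorem~\ref{thdeg2} by first checking that $\vphi(r)=r^\alpha$, together with $f(u)=u^{-p}$, satisfies the hypotheses of Theorem~\ref{thdeg2} precisely when \eqref{aabbb} holds. Conditions \eqref{eqp1}--\eqref{eqp2} translate into explicit integral convergence statements: \eqref{eqp1} becomes $\int_1^\infty r^{1+\alpha}\,dr<\infty$, i.e. $\alpha<-2$; and \eqref{eqp2} for a given $a>0$ becomes $\int_0^1 r^{N-1}r^\alpha (ar^{2-N})^{-p}\,dr = a^{-p}\int_0^1 r^{N-1+\alpha+p(N-2)}\,dr<\infty$, i.e. $N+\alpha+p(N-2)>0$, which is independent of $a$. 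Thus \eqref{aabbb} is equivalent to \eqref{eqp1}--\eqref{eqp2} holding for \emph{all} $a>0$, so Theorem~\ref{thdeg2} applies exactly under \eqref{aabbb}, giving radial symmetry of every $C^2$ positive solution of \eqref{e1} and the two-parameter description of the solution set. For the converse direction (nonexistence when \eqref{aabbb} fails), I would invoke the ``only if'' part of Theorem~\ref{k2} with $K=\{0\}$: if \eqref{e1} has a positive $C^2$ solution then \eqref{eqp1}--\eqref{eqp2} must hold for some $a>0$, hence \eqref{aabbb}.

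The remaining, and only genuinely computational, part is to identify the minimal solution $\xi$ explicitly. Here I would look for a radially symmetric power solution of the equation $-\Delta u=|x|^\alpha u^{-p}$ on $\RR^N\setminus\{0\}$. Writing $u(x)=A|x|^\sigma$ and using $\Delta(|x|^\sigma)=\sigma(\sigma+N-2)|x|^{\sigma-2}$, the equation forces the exponent balance $\sigma-2=\alpha-p\sigma$, giving $\sigma=(2+\alpha)/(1+p)$, and the coefficient equation $-A\sigma(\sigma+N-2)=A^{-p}$, i.e. $A^{1+p}=-1/\bigl(\sigma(\sigma+N-2)\bigr)$. A short substitution shows $\sigma+N-2 = \bigl(p(N-2)+N+\alpha\bigr)/(1+p)$, so $\sigma(\sigma+N-2) = (2+\alpha)\bigl(p(N-2)+N+\alpha\bigr)/(1+p)^2$; under \eqref{aabbb} we have $2+\alpha<0$ and $p(N-2)+N+\alpha>0$, so this product is negative and $A^{1+p}=-(1+p)^2/\bigl((\alpha+2)(p(N-2)+N+\alpha)\bigr)>0$, which is exactly the constant in \eqref{xii}. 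Thus $\xi$ is a genuine positive $C^2$ solution of \eqref{e1}.

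To see that $\xi$ is the \emph{minimal} solution, I note that $\sigma=(2+\alpha)/(1+p)\in(0,?)$: since $\alpha<-2$ we get $\sigma<0$, so in fact $\xi(x)=A|x|^\sigma\to0$ as $|x|\to\infty$ and $\xi(x)\to\infty$ as $|x|\to0$; comparing with \eqref{lim} this identifies $\xi$ with $u_{a,b}$ for the parameter choice $b=0$ and $a=\lim_{|x|\to0}|x|^{N-2}\xi(x)$, which is $+\infty$ unless $N-2+\sigma=0$ — so more carefully, $\xi$ corresponds to the boundary case and the minimality should be argued by the comparison principle: any positive solution $u$ of \eqref{e1} is, by Theorem~\ref{thdeg2}(ii), of the form $u_{a,b}$ with $a,b\ge0$, and one shows $u_{a,b}\ge u_{0,0}=\xi$ directly. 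Concretely, $\tilde u=u_{0,0}$ is the minimal solution produced by Theorem~\ref{k2}/\ref{thdeg2}, it is radial, vanishes at infinity, and the unique radial power solution with these properties is $\xi$; uniqueness of the ODE solution with the prescribed asymptotics pins down $\tilde u=\xi$.

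The main obstacle I anticipate is not the algebra but the bookkeeping of which asymptotic normalization corresponds to the minimal solution: one must be careful that $u_{0,0}$ (the solution with $a=b=0$ in \eqref{lim}) is well-defined and is indeed the pointwise-smallest element of $\{u_{a,b}\}$, and that the explicit power function $\xi$ really has the asymptotics of $u_{0,0}$ rather than of some other $u_{a,b}$. Once the radial ODE $-\bigl(r^{N-1}u'\bigr)' = r^{N-1+\alpha}u^{-p}$ is written down, monotonicity of solutions in the shooting parameters and the ordering $u_{a,b}\le u_{a',b'}$ for $a\le a'$, $b\le b'$ give minimality of $\xi=u_{0,0}$; I would cite the comparison arguments already developed in the proof of Theorem~\ref{thdeg2} rather than redo them.
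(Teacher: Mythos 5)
Your reduction to Theorem \ref{thdeg2} (translating \eqref{eqp1}--\eqref{eqp2} into $\alpha<-2$ and $N+\alpha+p(N-2)>0$, noting the independence of $a$), your use of Theorem \ref{k2} for the nonexistence direction, and your computation of the exponent $\sigma=(2+\alpha)/(1+p)$ and of the constant $A$ all match the paper and are correct. The gap is in the identification of $\xi$ as the minimal solution. You assert that $\lim_{|x|\to 0}|x|^{N-2}\xi(x)$ ``is $+\infty$ unless $N-2+\sigma=0$,'' and then retreat into a vague comparison argument. This is simply a miscomputation:
\[
N-2+\sigma \;=\; \frac{(1+p)(N-2)+2+\alpha}{1+p}\;=\;\frac{N+\alpha+p(N-2)}{1+p}\;>\;0
\]
precisely under the first condition of \eqref{aabbb}, so $|x|^{N-2}\xi(x)=A\,|x|^{N-2+\sigma}\to 0$ as $|x|\to 0$. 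This is the whole point: $\xi$ satisfies \eqref{limx}, i.e.\ it has the asymptotics $a=b=0$ at the origin and at infinity, and Theorem \ref{thdeg2}(ii) (every solution is $u_{a,b}$, and the asymptotics determine $(a,b)$ uniquely) together with Step 1 of its proof (the minimal solution is the one satisfying \eqref{limx}) immediately gives $\xi=u_{0,0}=\tilde u$. The paper's argument is exactly this one-line observation.

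Your fallback — ``the unique radial power solution which is radial and vanishes at infinity is $\xi$'' — does not close the gap, because all of the $u_{a,0}$, $a\ge 0$, are radial and vanish at infinity; what distinguishes $u_{0,0}$ is the vanishing of $|x|^{N-2}u$ at the origin, which is the very limit you computed incorrectly. The auxiliary monotonicity $u_{a,b}\le u_{a',b'}$ for $a\le a'$, $b\le b'$ that you propose to prove is not needed (and is not proved in the paper); once the limit above is corrected, the argument is complete.
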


\smallskip

Using Theorem \ref{thdeg2} we also obtain:

\smallskip

\begin{corollary}\label{c2} Let $\alpha\in\RR$, $\beta,p>0$.
Then, the equation
\begin{equation}\label{e2}
-\Delta u= |x|^\alpha\log^{\beta}(1+|x|) u^{-p} \quad\mbox{ in
}\RR^N\setminus\{0\},\, N\geq 3,
\end{equation}
has solutions if and only if
\begin{equation}\label{aabbbb}
N+\alpha+\beta+p(N-2)>0\quad\mbox{ and }\quad \alpha<-2.
\end{equation}
Furthermore, if \eq{aabbbb} holds, then:
\begin{enumerate}
\item[(i)] the set of positive solutions of \eq{e2} consists of a
two-parameter family of radially symmetric functions as described
in Theorem \ref{thdeg2}; \item[(ii)] the minimal solution of
\eq{e2} has a removable singularity at the origin if and only if
$\alpha+\beta>-2$.
\end{enumerate}
\end{corollary}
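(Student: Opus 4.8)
The plan is to obtain the corollary by specializing Theorem~\ref{thdeg2} and the removability statement of Theorem~\ref{k2} to $K=\{0\}$, so that $K_1=\emptyset$, $K_2=\{0\}$, $\delta_K(x)=|x|$, with $\vphi(r)=r^{\alpha}\log^{\beta}(1+r)$ and $f(u)=u^{-p}$. Since $p>0$, $f$ satisfies $(A1)$; $\vphi$ is positive and $C^{\infty}$ on $(0,\infty)$, hence locally Hölder. From $\vphi'(r)=r^{\alpha-1}\log^{\beta-1}(1+r)[\alpha\log(1+r)+\beta r/(1+r)]$ one reads off that $\vphi$ is nonincreasing near $\infty$ whenever $\alpha<0$; near $0$ one has $\vphi(r)=r^{\alpha+\beta}(1+o(1))$, so $\vphi$ is nonincreasing near $0$ exactly when $\alpha+\beta\le0$, whereas if $\alpha+\beta>0$ then $\vphi$ extends continuously to $r=0$ with $\vphi(0)=0$ (and is merely unimodal on $(0,\infty)$); I return to this last, slightly delicate, case below.

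The substance is three elementary integral evaluations, using $f(ar^{2-N})=a^{-p}r^{(N-2)p}$, $\log^{\beta}(1+r)\sim r^{\beta}$ as $r\to0$ and $\log^{\beta}(1+r)\sim(\log r)^{\beta}$ as $r\to\infty$ (with $\beta>0$): condition \eqref{eqp1} amounts to $\int_1^{\infty}r^{1+\alpha}\log^{\beta}(1+r)\,dr<\infty$, i.e. to $\alpha<-2$; condition \eqref{eqp2} amounts to $\int_0^1 r^{N-1+\alpha+(N-2)p}\log^{\beta}(1+r)\,dr<\infty$, i.e. to $N+\alpha+\beta+p(N-2)>0$, with no dependence on $a$; and $\int_0^1 r\vphi(r)\,dr<\infty$ if and only if $\alpha+\beta>-2$. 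Thus \eqref{eqp1}--\eqref{eqp2} are together equivalent to \eqref{aabbbb}, and when \eqref{aabbbb} holds \eqref{eqp2} is valid for every $a>0$; hence Theorem~\ref{thdeg2} applies and yields (i), with the radial family $\{u_{a,b}\}$ and the limits \eqref{lim}. In the same way, the concluding assertion of Theorem~\ref{k2} gives that the minimal solution of \eqref{e2} has a removable singularity at the origin if and only if $\int_0^1 r\vphi(r)\,dr<\infty$, i.e. if and only if $\alpha+\beta>-2$, which is (ii).

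It remains to treat the converse and the case $\alpha+\beta>0$. If \eqref{aabbbb} fails because $N+\alpha+\beta+p(N-2)\le0$, then $\alpha+\beta<-(N+p(N-2))<0$, so $\vphi$ is nonincreasing near $0$ and near $\infty$ and Theorem~\ref{k2} applies directly: \eqref{eqp2} fails and \eqref{e2} has no positive $C^2$ solution. If instead $\alpha\ge-2$, I argue by hand: since $\vphi(|x|)\ge(\log 2)^{\beta}|x|^{\alpha}$ for $|x|\ge1$, a positive $C^2$ solution $u$ of \eqref{e2} would satisfy $-\Delta u\ge c|x|^{\alpha}u^{-p}$ in $\{|x|>1\}$; its spherical average $\bar u$ obeys $-(r^{N-1}\bar u')'\ge c\,r^{N-1+\alpha}\bar u^{-p}$ by Jensen's inequality, and a positive superharmonic function on an exterior domain of $\RR^{N}$ ($N\ge3$) has bounded spherical averages, so $\bar u\le M$; integrating twice and using $\alpha\ge-2$ (and $\alpha>-N$) forces $\bar u(r)\to-\infty$, a contradiction. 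Finally, when \eqref{aabbbb} holds but $\alpha+\beta>0$, existence of a solution to \eqref{e2} can also be exhibited directly by a power supersolution $u(x)=A|x|^{\gamma}$: one picks $\gamma\in(2-N,0)$ with $\gamma(1+p)$ strictly between $\alpha+2$ and $\min\{0,\alpha+\beta+2\}$ --- an interval nonempty precisely because $(2-N)(1+p)<\alpha+\beta+2$, the first inequality in \eqref{aabbbb} --- and checks that $-\Delta u\ge\vphi(|x|)u^{-p}$ holds for $A$ large; the assertions (i) and (ii) in this regime then follow from Theorems~\ref{thdeg2} and \ref{k2}, whose arguments near the origin use only the integrability and local boundedness of $\vphi$. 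The main obstacle I anticipate is exactly this last reconciliation: $\vphi=r^{\alpha}\log^{\beta}(1+r)$ is not globally monotone when $\alpha+\beta>0$, so one must verify that the proofs of Theorems~\ref{k2} and \ref{thdeg2} use the monotonicity of $\vphi$ only near infinity (together with integrability near $0$), or else supply the direct estimates above; everything else is bookkeeping with the three integrals.
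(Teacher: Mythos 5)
Your proposal is correct and takes the route the paper intends: Corollary \ref{c2} is presented there as a direct consequence of Theorems \ref{thdeg2} and \ref{k2}, and your three integral computations --- \eq{eqp1} holds iff $\alpha<-2$; \eq{eqp2} holds iff $N+\alpha+\beta+p(N-2)>0$, and for every $a>0$ at once since the $a$-dependence factors out as $a^{-p}$; and $\int_0^1 r\vphi(r)\,dr<\infty$ iff $\alpha+\beta>-2$ --- are exactly the required bookkeeping. What you add, and the paper omits, is the observation that $\vphi(r)=r^\alpha\log^\beta(1+r)$ is increasing near $r=0$ when $\alpha+\beta>0$, a regime compatible with \eq{aabbbb}, so the hypothesis of Theorem \ref{k2} that $\vphi$ be nonincreasing near zero is not literally met. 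Your repairs are sound: in the nonexistence direction, when $N+\alpha+\beta+p(N-2)\le 0$ one automatically has $\alpha+\beta<0$ and Theorem \ref{k2} applies, while for $\alpha\ge-2$ your spherical-average argument is essentially Proposition \ref{thvv} rerun by hand (you could cite it directly, since $\vphi$ is monotone at infinity for every $\alpha$ and $\liminf_{t\searrow 0}t^{-p}>0$); and your supersolution $A|x|^{\gamma}$ does yield existence when $\alpha+\beta>0$. The one soft spot is your closing assertion that (i) and (ii) in that regime follow because the proofs of Theorems \ref{k2} and \ref{thdeg2} use monotonicity of $\vphi$ only near infinity together with integrability near $0$: that claim is in fact true --- Lemma \ref{vab}, the approximating problems \eq{edecma} posed on domains where $\delta_K\ge 1/n$, and the representation-formula argument in Step 3 of the proof of Theorem \ref{thdeg2} nowhere use monotonicity of $\vphi$ near the origin --- but you assert it rather than verify it, so as written it remains a declared plan. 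Since the paper itself offers nothing beyond ``using Theorem \ref{thdeg2} we also obtain,'' I would not count this as a gap relative to the paper; it is, if anything, a more careful reading of the hypotheses.
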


\smallskip

The outline of the paper is as follows. In the next section we
collect some preliminary results concerning elliptic boundary
value problems in bounded domains involving the distance function
up to the boundary. The last four sections of the paper are
devoted to the proofs of Theorems \ref{thoptim2}, \ref{thoptim3},
\ref{k2} and \ref{thdeg2} respectively.

\section{Preliminary results}

In this part we obtain some results for related elliptic problems
in bounded domains that will be further used in the sequel. We
start with the following comparison result.

\begin{lemma}\label{comp}Let $\Omega\subset\RR^N$ $(N\geq 2)$ be a nonempty open set
and $g:\Omega\times (0,\infty)\rightarrow (0,\infty)$ be a
continuous function such that $g(x,\cdot)$ is decreasing for all
$x\in\Omega$. Assume that $u,v$ are $C^2$ positive functions that
satisfy
$$\Delta u+g(x,u)\leq 0\leq \Delta v+g(x,v)\quad\mbox{ in }\Omega,$$
$$\lim_{x\in\Omega, \,x\rightarrow y}(v(x)-u(x))\leq 0\quad\mbox{ for all }y\in\partial^\infty\Omega.$$
Then $u\geq v$ in $\Omega$. (Here $\partial^\infty\Omega$ stands
for the Euclidean boundary $\partial\Omega$ if $\Omega$ is bounded
and for $\partial\Omega\cup\{\infty\}$ if $\Omega$ is unbounded)
\end{lemma}
\begin{proof} Assume by contradiction that the set
$\omega:=\{x\in\Omega: u(x)<v(x)  \}$ is not empty and let
$w:=v-u$. Since $\lim_{x\in\Omega,\, x\rightarrow y}w(x)\leq 0$
for all $y\in\partial^\infty\Omega$, it follows that $w$ is
bounded from above and it achieves its maximum on $\Omega$ at a
point that belongs to $\omega$. At that point, say $x_0$, we have
$$
0\leq -\Delta w(x_0)\leq g(x_0,v(x_0))-g(x_0,u(x_0))<0,
$$ which is a
contradiction. Therefore, $\omega=\emptyset$, that is, $u\geq v$
in $\Omega$.
\end{proof}

\begin{lemma}\label{crt} Let $\Omega\subset\RR^N$ $(N\geq 2)$ be a bounded domain with $C^2$ boundary and let
$g:\overline \Omega\times (0,\infty)\rightarrow (0,\infty)$ be a
H\"older continuous function such that for all
$x\in\overline\Omega$ we have $g(x,\cdot)\in C^1(0,\infty)$ and
$g(x,\cdot)$ is decreasing. Then, for any $\phi\in
C(\partial\Omega)$, $\phi\geq 0$, the problem
\begin{equation}\label{phii}
\left\{\begin{aligned}
-&\Delta u=g(x,u), \; u>0&& \quad\mbox{ in }\Omega,\\
&u=\phi(x) &&\quad\mbox{ on }\partial\Omega,
\end{aligned}\right.
\end{equation}
has a unique solution $u\in C^2(\Omega)\cap C(\overline\Omega)$.
\end{lemma}
\begin{proof} For all $n\geq 1$ consider the following
perturbed problem
\begin{equation}\label{phiip}
\left\{\begin{aligned}
-&\Delta u=g\Big(x,u+\frac{1}{n} \Big), \; u>0&& \quad\mbox{ in }\Omega,\\
&u=\phi(x) &&\quad\mbox{ on }\partial\Omega.
\end{aligned}\right.
\end{equation}
It is easy to see that $\underline u\equiv 0$ is a sub-solution.
To construct a super-solution, let $w$ be the solution of
$$
\left\{\begin{aligned}
-&\Delta w=1, \; w>0&& \quad\mbox{ in }\Omega,\\
&w=0 &&\quad\mbox{ on }\partial\Omega.
\end{aligned}\right.
$$
Then $\overline u=Mw+||\phi||_\infty+1$ is a super-solution of
\eq{phiip} provided $M>1$ is large enough. Thus, by sub and
super-solution method and Lemma \ref{comp}, there exists a unique
solution $u_n \in C^2(\Omega) \cap C(\overline \Omega)$ of
\eqref{phiip}. Furthermore, since $g(x,\cdot)$ is decreasing, by
Lemma \ref{comp} we deduce
\begin{equation}\label{lcomp1}
u_1\leq u_2\leq \dots \leq u_n\leq\dots \leq \overline u
\quad\mbox{ in }\Omega,
\end{equation}
\begin{equation}\label{lcomp2}
u_n+\frac{1}{n}\geq u_{n+1}+\frac{1}{n+1} \quad\mbox{ in }\Omega.
\end{equation}
Hence $\{u_n(x)\}$ is increasing and bounded for all $x\in\Omega$.
Letting $u(x):=\lim_{n\rightarrow \infty}u_n(x)$, a standard
bootstrap argument (see \cite{crrt}, \cite{gt}) implies
$u_n\rightarrow u$ in $C^2_{loc}(\Omega)$ so that passing to the
limit in \eqref{phiip} we deduce $-\Delta u=g(x,u)$ in $\Omega$.
From \eqref{lcomp1} and \eqref{lcomp2} we obtain $u_n+1/n\geq
u\geq u_n$ in $\Omega$, for all $n\geq 1$. This yields $u\in
C(\overline\Omega)$ and $u=\phi(x)$ on $\partial\Omega$. Therefore
$u\in C^2(\Omega)\cap C(\overline\Omega)$ is a solution of
\eqref{phii}. The uniqueness follows from Lemma \ref{comp}.
\end{proof}

Lemma \ref{comp2} and Lemma \ref{comp3} below extend the existence
results obtained in \cite{gr1,gr2,gr3}.

\begin{lemma}\label{comp2} Let $\o\subset\RR^N$ $(N\geq 2)$ be a bounded domain with $C^2$
boundary. Also let $\vphi \in C^{0,\gamma}(0,\infty)$
$(0<\gamma<1)$ and $f\in C^1(0,\infty)$ be positive functions such
that:
\begin{enumerate}
\item[(i)] $f$ is decreasing; \item[(ii)] $\vphi$ is nonincreasing
and $\int_0^1 r\vphi(r)dr<\infty$.
\end{enumerate}
Then, the problem
\begin{equation}\label{ede}
\left\{\begin{aligned}
-&\Delta u=\vphi(\delta_\o(x))f(u), \; u>0&& \quad\mbox{ in }\Omega,\\
&u=0 &&\quad\mbox{ on }\partial\Omega,
\end{aligned}\right.
\end{equation}
has a unique solution $u\in C^2(\o)\cap C(\oo)$. Furthermore,
there exist $c_1,c_2>0$ and $0<r_0<1$ such that the unique
solution $u$ of \eq{ede} satisfies
\begin{equation}\label{best}
c_1\leq \frac{u(x)}{H(\delta_\o(x))}\leq c_2 \quad\mbox{ in
}\{x\in\o: 0<\delta_\o(x)<r_0\},
\end{equation}
where  $H:[0,1]\ri (0,\infty)$ is the unique solution of
\eq{oregan}.
\end{lemma}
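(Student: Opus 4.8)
The plan is to build the solution by a sub- and super-solution argument combined with an approximation that removes the singularity of $f$ at the boundary, mimicking the structure of the proof of Lemma \ref{crt} but now with an explicit barrier encoding the function $H$. First I would recall from \cite[Theorem 2.1]{aga} that \eq{oregan} has a unique positive solution $H\in C[0,1]\cap C^2(0,1)$, and I would record the elementary one-dimensional estimates for $H$ near the endpoints that follow from $-H''=\vphi f(H)\geq 0$ together with the integrability hypothesis $\int_0^1 r\vphi(r)\,dr<\infty$: namely $H$ is concave, so $H(t)\leq C t$ and $H(t)\geq c\min\{t,1-t\}$ on $[0,1]$, and moreover $t\mapsto H(t)/t$ is nonincreasing. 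These facts let $H(\delta_\Omega(x))$ serve as a two-sided barrier: near $\partial\Omega$ the distance function $\delta_\Omega$ is $C^2$ and satisfies $|\nabla\delta_\Omega|=1$, $|\Delta\delta_\Omega|\leq C$, and an easy computation gives $-\Delta\bigl(H(\delta_\Omega(x))\bigr)=-H''(\delta_\Omega)|\nabla\delta_\Omega|^2-H'(\delta_\Omega)\Delta\delta_\Omega=\vphi(\delta_\Omega)f(H(\delta_\Omega))-H'(\delta_\Omega)\Delta\delta_\Omega$ in the tubular neighborhood $\{0<\delta_\Omega<r_0\}$.

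Next I would construct a global super-solution of the form $\overline u=AH(\delta_\Omega(x))$ near the boundary, patched with $A$ times the torsion function $w$ (the solution of $-\Delta w=1$, $w=0$ on $\partial\Omega$, as in Lemma \ref{crt}) in the interior, choosing $A$ large; the decreasing nature of $f$ and the lower bound $H(\delta_\Omega)\geq c\,\delta_\Omega$ make the error term $-A H'(\delta_\Omega)\Delta\delta_\Omega$ absorbable because $\vphi(\delta_\Omega)f(AH(\delta_\Omega))$ stays comparable to $\vphi(\delta_\Omega)f(\delta_\Omega)$, which dominates a bounded quantity once $A$ is large (here one uses that $\vphi$ is nonincreasing and positive so $\vphi(\delta_\Omega)\geq\vphi(r_0)>0$, plus $f$ blowing up or at least staying positive near $0$). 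Symmetrically, $\underline u=aH(\delta_\Omega(x))$ with $a$ small is a sub-solution near $\partial\Omega$, extended by a small interior bump so that $\underline u\leq\overline u$ throughout $\Omega$. For the existence itself I would then regularize: for each $n$ solve the non-singular problem $-\Delta u_n=\vphi(\delta_\Omega)f(u_n+\tfrac1n)$ with zero boundary data via Lemma \ref{crt} applied to $g_n(x,s)=\vphi(\delta_\Omega(x))f(s+\tfrac1n)$ (this $g_n$ is H\"older in $x$ on $\overline\Omega$ because $\vphi\in C^{0,\gamma}$ and $\delta_\Omega\in C^2$, is $C^1$ and decreasing in $s$), obtain monotonicity $u_n\uparrow$ and the uniform bounds $\underline u\leq u_n\leq\overline u$ from Lemma \ref{comp}, pass to the limit with the interior bootstrap of \cite{crrt,gt} to get $u\in C^2(\Omega)$ solving \eq{ede}, and use the barrier $\overline u\to 0$ at $\partial\Omega$ to conclude $u\in C(\oo)$ with $u=0$ on $\partial\Omega$. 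Uniqueness is immediate from Lemma \ref{comp}, and the two-sided estimate \eq{best} with $c_1=a$, $c_2=A$ is exactly the statement $\underline u\leq u\leq\overline u$ restricted to $\{0<\delta_\Omega<r_0\}$.

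The main obstacle I anticipate is making the barrier computation with $H(\delta_\Omega(x))$ rigorous and global: $H$ is only $C^2$ on the open interval $(0,1)$ and its derivative $H'$ may blow up at $t=0$ if $\vphi$ is strongly singular, so one must check that the curvature term $H'(\delta_\Omega)\Delta\delta_\Omega$ is genuinely dominated by the reaction term uniformly as $\delta_\Omega\to 0$; this requires the quantitative comparison between $H'(t)$ and $\int_0^t\vphi(s)\,ds$ coming from integrating $-H''=\vphi f(H)$ and the hypothesis $\int_0^1 r\vphi(r)\,dr<\infty$, and it is also the place where one must fix $r_0$ small enough that $\delta_\Omega$ is $C^2$ on $\{\delta_\Omega<r_0\}$ and then handle the patching region $\{\delta_\Omega=r_0\}$ so that the global super- and sub-solutions are admissible in the sense of Lemma \ref{comp} (viscosity-type inequalities across the patch, or a smooth interpolation). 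Once this barrier analysis is in place the rest is a routine repetition of the scheme of Lemma \ref{crt}.
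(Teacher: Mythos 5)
Your overall architecture (regularize via Lemma \ref{crt}, order the approximations with Lemma \ref{comp}, pass to the limit, and read \eq{best} off the barriers) matches the paper's, but the heart of the lemma is the barrier construction, and there your proposal has a genuine gap which you flag but do not close. Taking $\overline u=AH(\delta_\o(x))$ produces the term $-AH'(\delta_\o)\Delta\delta_\o$, which for a non-convex $C^2$ domain has no favourable sign, while $H'(t)\to+\infty$ as $t\searrow 0$ whenever $\vphi$ is sufficiently singular (e.g. $\vphi(r)=r^{\alpha}$, $f(u)=u^{-p}$ with $p-\alpha>1$, where $H(t)=t^{(2+\alpha)/(1+p)}$). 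Your proposed absorption amounts to $(A-1)\vphi(\delta)f(AH(\delta))\gtrsim A\,H'(\delta)$; even granting the correct one-dimensional estimate $H'(t)\leq H'(r_0)+r_0\,\vphi(t)f(H(t))$ near $0$ (which follows from integrating $-H''=\vphi f(H)$ and the monotonicity of $\vphi f(H)$), this reduces to comparing $f(AH)$ with $f(H)$, and the monotonicity of $f$ works \emph{against} you: enlarging $A$ shrinks $f(AH)$, and for $f$ varying fast near $0$ or at infinity the ratio $f(AH)/f(H)$ need not be bounded below, so no choice of $A$ and $r_0$ is actually exhibited that makes $AH(\delta_\o)$ a supersolution. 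The non-$C^2$ patching across $\{\delta_\o=r_0\}$ (Lemma \ref{comp} is stated for $C^2$ functions) is likewise left unresolved.

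The missing idea is the paper's device of composing $H$ not with $\delta_\o$ but with $ce_1$, where $e_1>0$ is the first Dirichlet eigenfunction and $c$ is chosen so small that $ce_1\leq\min\{a,\delta_\o\}$ with $H'>0$ on $(0,a]$. Then $-\Delta\big(MH(ce_1)\big)=Mc^2\vphi(ce_1)f(H(ce_1))|\nabla e_1|^2+Mc\la_1e_1H'(ce_1)$: the analogue of your curvature term is $+Mc\la_1e_1H'(ce_1)\geq 0$, i.e. it has the \emph{good} sign, so nothing has to be absorbed, and $f$'s monotonicity is used only in the harmless direction $f(H(ce_1))\geq f(MH(ce_1))$. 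One then needs only $|\nabla e_1|\geq d>0$ near $\partial\o$ (Hopf) to get the coefficient $Mc^2d^2\geq1$ there, and on the complementary compact subset the positive term $e_1H'(ce_1)$ is bounded below against a bounded right-hand side. This barrier is a single $C^2$ function on all of $\o$, so no patching is needed, and \eq{best} then follows from $C_1\delta_\o\leq e_1\leq C_2\delta_\o$ combined with $H$ increasing near $0$ (upper bound) and $H(t)/t$ nonincreasing by concavity (lower bound) --- exactly the facts about $H$ you recorded at the outset. I recommend replacing your $\delta_\o$-barrier by this eigenfunction barrier; the rest of your scheme then goes through.
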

\begin{proof}  Let $(\la_1,e_1)$ be the first eigenvalue
and the first eigenfunction of $-\Delta$ in $\o$ subject to
Dirichlet boundary condition. It is well known that $e_1$ has
constant sign in $\o$ so that normalizing, we may assume that
$e_1>0$ in $\o$. Also, since $\o$ has a $C^2$ boundary, we have
$\partial e_1/\partial\nu<0$ on $\partial\o$ and
\begin{equation}\label{eigenf}
C_1\delta_\o(x)\leq e_1(x)\leq C_2\delta_\o(x)\quad\mbox{ in }\o,
\end{equation}
where $\nu$ is the outward unit normal at $\partial\o$ and
$C_1,C_2$ are two positive constants. We claim that there exist
$M>1$ and $c>0$ such that $\overline u=M H(c e_1)$ is a
super-solution of \eq{ede}. First, since the solution $H$ of
\eq{oregan} is positive and concave, we can find $0<a<1$ such that
$H'>0$ on $(0,a]$. Let $c>0$ be such that
\[
ce_1(x)\leq \min\{a,\delta_\o(x)\}\quad\mbox{ in }\o.
\]
Then
\begin{equation}\label{dell}
\begin{aligned}
-\Delta \overline u&=-Mc^2H''(ce_1)|\nabla
e_1|^2+Mc\la_1e_1H'(ce_1)\\
&=Mc^2\vphi(ce_1) f(H(ce_1))|\nabla
e_1|^2+Mc\la_1e_1H'(ce_1)\\
&\geq Mc^2\vphi(\delta_\o(x)) f(\overline u)|\nabla
e_1|^2+Mc\la_1e_1H'(ce_1)\quad\mbox{ in }\o.
\end{aligned}
\end{equation}
Since $e_1>0$ in $\o$ and $\partial e_1/\partial\nu<0$ on
$\partial\o$, we can find $d>0$ and a subdomain
$\omega\subset\subset\o$ such that
\[
|\nabla e_1|>d\quad\mbox{ in } \o\setminus\omega.
\]
Therefore, from \eq{dell} we obtain
\begin{equation}\label{separate}
-\Delta \overline u\geq Mc^2d^2 \vphi(\delta_\o(x)) f(\overline
u)\quad\mbox{ in }\o\setminus\omega, \qquad-\Delta \overline u\geq
Mc\la_1e_1H'(ce_1)\quad\mbox{ in }\omega.
\end{equation}
Now, we choose $M>0$ large enough such that
\begin{equation}\label{separatee}
Mc^2d^2>1\quad\mbox{ and}\quad Mc\la_1e_1H'(ce_1)\geq
\vphi(\delta_\o(x)) f(\overline u) \quad\mbox{ in }\omega.
\end{equation}
Note that the last relation in \eq{separatee} is possible since in
$\omega$ the right side of the inequality is bounded and the left
side is bounded away from zero.
 Thus, from \eq{separate} and \eq{separatee}, $\overline
u$ is a super-solution for \eq{ede}. Similarly, we can choose
$m>0$ small enough such that $\underline u=mH(ce_1)$ is a
sub-solution of \eq{ede}. Therefore, by the sub and super-solution
method we find a solution $u\in C^2(\o)\cap C(\oo)$ such that
$\underline u\leq u\leq \overline u$ in $\o$. The uniqueness
follows from Lemma \ref{comp}. In order to prove the boundary
estimate \eq{best}, note first that $ce_1\leq \delta_\o(x)$ in
$\o$ so
$$
u(x)\leq \overline u(x)\leq MH(\delta_\o(x))\quad\mbox{ in
}\{x\in\o: 0<\delta_\o(x)<a\}.
$$
On the other hand, since $H$ is concave and $H(0)=0$, we easily
derive that $t\ri H(t)/t$ is decreasing on $(0,1)$. Also we can
assume $cC_1<1$. Thus,
\[
u(x)\geq mH(ce_1)\geq m H(cC_1\delta_\o(x))\geq mcC_1
H(\delta_\o(x)),
\]
for all $x\in\o$ with $0<\delta_\o(x)<1$. The proof of Lemma
\ref{comp2} is now complete.
\end{proof}

\begin{lemma}\label{comp3} Let $K\subset\RR^N$ $(N\geq 2)$ be a compact set, $\o\subset\RR^N$ be a
bounded domain such that $K\subset \o$ and $\o\setminus K$ is
connected and has a $C^2$ boundary. Let $\vphi$ and $f$ be as in
Lemma \ref{comp2}. Then, there exists a unique solution $u\in
C^2(\o\setminus K)\cap C(\oo\setminus int(K))$ of the problem
\begin{equation}\label{edec}
\left\{\begin{aligned}
-&\Delta u=\vphi(\delta_K(x))f(u), \; u>0&& \quad\mbox{ in }\Omega\setminus K,\\
&u=0 &&\quad\mbox{ on }\partial(\Omega\setminus K).
\end{aligned}\right.
\end{equation}
Furthermore, there exist $c_1,c_2>0$ and $0<r_0<1$ such that the
unique solution $u$ of \eq{edec} satisfies
\begin{equation}\label{besti}
c_1\leq \frac{u(x)}{H(\delta_K(x))}\leq c_2 \quad\mbox{ in
}\{x\in\o\setminus K:0<\delta_K(x)<r_0\},
\end{equation}
where  $H$ is the unique solution of \eq{oregan}.
\end{lemma}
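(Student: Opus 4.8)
The plan is to reduce existence, uniqueness and the upper half of \eq{besti} to Lemma~\ref{comp2} and Lemma~\ref{comp}, and to obtain the lower half from a local barrier near $\partial K$. Put $D:=\o\setminus K$, which is a bounded $C^2$ domain by hypothesis, and $d_0:=\mathrm{dist}(K,\partial\o)>0$; then $\partial D=\partial\o\cup\partial K$, the two pieces lying at distance $\ge d_0$, and $\delta_D(x):=\mathrm{dist}(x,\partial D)$ equals $\delta_K(x)$ on $\{\delta_K<d_0/2\}$. Lemma~\ref{comp2} applied to $D$ gives $v\in C^2(D)\cap C(\overline D)$ with $-\Delta v=\vphi(\delta_D)f(v)$, $v>0$ in $D$, $v=0$ on $\partial D$, and $c_1'\le v/H(\delta_D)\le c_2'$ on $\{0<\delta_D<r_0'\}$; since $\delta_D\le\delta_K$ and $\vphi$ is nonincreasing, $-\Delta v\ge\vphi(\delta_K)f(v)$, so $v$ is a supersolution of \eq{edec}. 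Next let $w\in C^2(D)\cap C(\overline D)$ solve $-\Delta w=1$ in $D$, $w=0$ on $\partial D$; for $m>0$ small, $\underline u:=mw$ satisfies $-\Delta\underline u=m\le\vphi(\sup_{\overline D}\delta_K)\,f(m\|w\|_{\infty})\le\vphi(\delta_K)f(\underline u)$, so it is a subsolution of \eq{edec}, and since $v\gtrsim H(\delta_D)\gtrsim\delta_D\gtrsim w$ near $\partial D$ (by concavity of $H$), $m$ can be chosen so that also $\underline u\le v$ in $D$.

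For existence, exhaust $D$ by $C^2$ subdomains $D_1\subset\subset D_2\subset\subset\dots$ with $\bigcup_j D_j=D$. On $\overline{D_j}$ the weight $\vphi(\delta_K(\cdot))$ is bounded and of class $C^{0,\gamma}$, so Lemma~\ref{crt} gives a unique $u_j\in C^2(D_j)\cap C(\overline{D_j})$, $u_j>0$, with $-\Delta u_j=\vphi(\delta_K)f(u_j)$ in $D_j$ and $u_j=v$ on $\partial D_j$. Since $\underline u\le v=u_j$ on $\partial D_j$ and $v,\underline u$ are a super- and a subsolution of the same equation, Lemma~\ref{comp} gives $\underline u\le u_j\le v$ in $D_j$; applied on $D_i$ for $i<j$ it also gives $u_j\le u_i$ there, so $(u_j)$ decreases to a limit $u$ with $\underline u\le u\le v$ in $D$. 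On each $D'\subset\subset D$, $u_j\ge\min_{D'}\underline u>0$ and $u_j\le v$, so $f(u_j)$ and hence $-\Delta u_j=\vphi(\delta_K)f(u_j)$ are bounded on $D'$ uniformly in $j$; by interior elliptic estimates (cf.\ \cite{crrt,gt}), $u_j\to u$ in $C^2_{\mathrm{loc}}(D)$, so $u$ solves $-\Delta u=\vphi(\delta_K)f(u)$ with $u>0$ in $D$, and $0<u\le v$ forces $u\in C(\overline D)$ with $u=0$ on $\partial D$; thus $u$ solves \eq{edec}. Uniqueness is immediate from Lemma~\ref{comp} on $D$.

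It remains to establish \eq{besti}. Fix $r_0\in(0,1)$ smaller than $r_0'$, than $d_0/2$, than the number $a$ with $H'>0$ on $(0,a]$ (provided by the proof of Lemma~\ref{comp2}), and small enough that $\delta_K\in C^2$ on the collar $V:=\{x\in D:\delta_K(x)<r_0\}$. If $0<\delta_K(x)<r_0$ then $\delta_D(x)=\delta_K(x)$, whence $u(x)\le v(x)\le c_2'H(\delta_K(x))$, i.e.\ the upper bound with $c_2:=c_2'$. For the lower bound note $|\nabla\delta_K|\equiv1$ and $C_\ast:=\sup_V|\Delta\delta_K|<\infty$ on $V$; for $m\in(0,1)$ set $\psi:=mH(\delta_K)$, so that, using $-H''=\vphi\,f(H)$,
\[
-\Delta\psi=m\vphi(\delta_K)f(H(\delta_K))-mH'(\delta_K)\Delta\delta_K .
\]
As $f$ is decreasing and $m<1$, the inequality $-\Delta\psi\le\vphi(\delta_K)f(\psi)$ holds provided $mC_\ast H'(\delta_K)\le(1-m)(-H''(\delta_K))$ on $V$; and this follows from $\inf_{(0,r_0]}(-H''/H')>0$, which holds because on $(0,a]$ the function $-H''=\vphi\cdot(f\circ H)$ is nonincreasing (a product of nonincreasing functions, $H$ being increasing there) while $H'>0$ is decreasing, so $H'(t)=H'(a)+\int_t^a(-H'')\le H'(a)+a(-H''(t))$ and hence
\[
\frac{-H''(t)}{H'(t)}\ge\frac{-H''(t)}{H'(a)+a(-H''(t))}\ge\frac{-H''(r_0)}{H'(a)+a(-H''(r_0))}=:L_0>0,\qquad t\in(0,r_0].
\]
Thus $\psi$ is a subsolution of \eq{edec} in $V$ once $m\le L_0/(L_0+C_\ast)$; shrinking $m$ further so that $mH(r_0)\le\min_{\{\delta_K=r_0\}}u$ makes $\psi\le u$ on $\partial V=\partial K\cup\{\delta_K=r_0\}$ (both vanish on $\partial K$), and Lemma~\ref{comp} on $V$ yields $u\ge mH(\delta_K)$ in $V$, i.e.\ the lower bound in \eq{besti} with $c_1:=m$.

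The one genuinely delicate step is the last subsolution: the zeroth-order term $mH'(\delta_K)\Delta\delta_K$ has the unfavourable sign where $\partial K$ is concave, so the natural profile $mH(\delta_K)$ survives as a subsolution only by virtue of the lower bound $L_0>0$ for $-H''/H'$, which is precisely where the ODE \eq{oregan} and the concavity of $H$ enter; everything else is routine given Lemmas~\ref{comp},~\ref{crt} and~\ref{comp2}.
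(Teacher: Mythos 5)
Your proof is correct, and its skeleton coincides with the paper's: apply Lemma \ref{comp2} to the $C^2$ domain $\o\setminus K$, note that $\delta_{\o\setminus K}\leq\delta_K$ together with the monotonicity of $\vphi$ makes the resulting $v$ a supersolution of \eq{edec}, take $mw$ as a subsolution, and obtain uniqueness and the upper half of \eq{besti} from Lemma \ref{comp} and the identity $\delta_{\o\setminus K}=\delta_K$ on a collar (your explicit exhaustion via Lemma \ref{crt} is just the sub/supersolution method spelled out). The one place where you genuinely diverge is the lower bound in \eq{besti}. The paper observes that, since $f$ is decreasing, $Mf(u)\geq f(Mu)$ for $M>1$, so $Mu$ is a supersolution; choosing $M$ with $Mu\geq v$ on $\partial\omega\setminus\partial K$ and applying Lemma \ref{comp} in the collar $\omega$ gives $Mu\geq v$ there, so the lower bound is simply inherited from \eq{bestc}. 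You instead build the barrier $mH(\delta_K)$ from scratch, which costs you the $C^2$ regularity of $\delta_K$ near $\partial K$ (legitimate here, since $\partial(\o\setminus K)$ is $C^2$, but an extra geometric input the paper never needs in this lemma) plus the quantitative estimate $\inf_{(0,r_0]}(-H''/H')>0$; your derivation of the latter from the monotonicity of $-H''=\vphi\cdot(f\circ H)$ and of $H'$ on $(0,a]$ is correct. The trade-off: your barrier argument is self-contained and makes the mechanism behind \eq{besti} visible (the curvature term $H'\Delta\delta_K$ is absorbed precisely because $-H''/H'$ is bounded below near $t=0$), whereas the paper's comparison of $Mu$ with $v$ is shorter and never touches the geometry of $\partial K$.
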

\begin{proof} According to Lemma \ref{comp2} there exists
$v\in C^2(\o\setminus K)\cap C(\overline{\o\setminus K})$ such
that
$$
\left\{\begin{aligned}
-&\Delta v=\vphi(\delta_{\o\setminus K}(x))f(v), \; v>0&& \quad\mbox{ in }\Omega\setminus K,\\
&v=0 &&\quad\mbox{ on }\partial(\Omega\setminus K),
\end{aligned}\right.
$$
which further satisfies
\begin{equation}\label{bestc}
c_1\leq \frac{v(x)}{H(\delta_{\o\setminus K}(x))}\leq c_2
\quad\mbox{ in }\{x\in\o\setminus K:0<\delta_{\o\setminus
K}(x)<\rho_0\},
\end{equation}
for some $0<\rho_0<1$ and $c_1,c_2>0$. Since $\delta_K(x)\geq
\delta_{\o\setminus K}(x)$ for all $x\in \Omega\setminus K$ and
$\vphi$ is nonincreasing, it is easy to see that $\overline u=v$
is a super-solution of \eq{edec}. Also it is not difficult to see
that $\underline u=m w$ is a sub-solution to \eq{edec} for $m>0$
sufficiently small, where $w$ satisfies
$$
\left\{\begin{aligned}
-&\Delta w=1, \; w>0&& \quad\mbox{ in }\Omega\setminus K,\\
&w=0 &&\quad\mbox{ on }\partial(\Omega\setminus K).
\end{aligned}\right.
$$
Using Lemma \ref{comp} we have $\underline u\leq \overline u$ in
$\o\setminus K$. Therefore, there exists a solution $u\in
C^2(\o\setminus K)\cap C(\oo\setminus int(K))$ of \eq{edec}. As
before, the uniqueness follows from Lemma \ref{comp}. In order to
prove \eq{besti}, let $0<r_0<\rho_0$ be small such that
$$
\omega:=\{x\in\o\setminus
K:0<\delta_K(x)<r_0\}\subset\subset\Omega \quad\mbox{ and }\quad
\delta_{\o\setminus K}(x)=\delta_K(x)\quad\mbox{for all }x\in
\omega.
$$
Then, from \eq{bestc} we have
\[
u\leq \overline u\leq c_2H(\delta_K(x))\quad\mbox{ in } \omega.
\]
For the remaining part of \eq{besti}, let $M>1$ be such that
$Mu\geq v$ on $\partial\omega\setminus\partial K$. Also
$$
-\Delta (Mu)= M\vphi(\delta_K(x))f(u)\geq \vphi(\delta_K(x))
f(Mu)\quad \mbox{ in }\omega.
$$
By Lemma \ref{comp} we have $Mu\geq v$ in $\omega$ and from
\eq{bestc} we obtain the first inequality in \eq{besti}. This
concludes the proof.
\end{proof}

\medskip

The following result is a direct consequence of Lemma \ref{comp3}.

\begin{lemma}\label{comp4} Let $K_1, K_2, L\subset\RR^N$ $(N\geq 2)$ be three compact sets (see Figure 1) such that
\[
K_1\cap L=\emptyset, \quad K_2\subset int(L), \quad K_1, L \mbox{
are the closure of $C^2$ domains}.
\]
\begin{figure}[tbph]\label{convv2}
\begin{center}
\includegraphics[height=4cm]{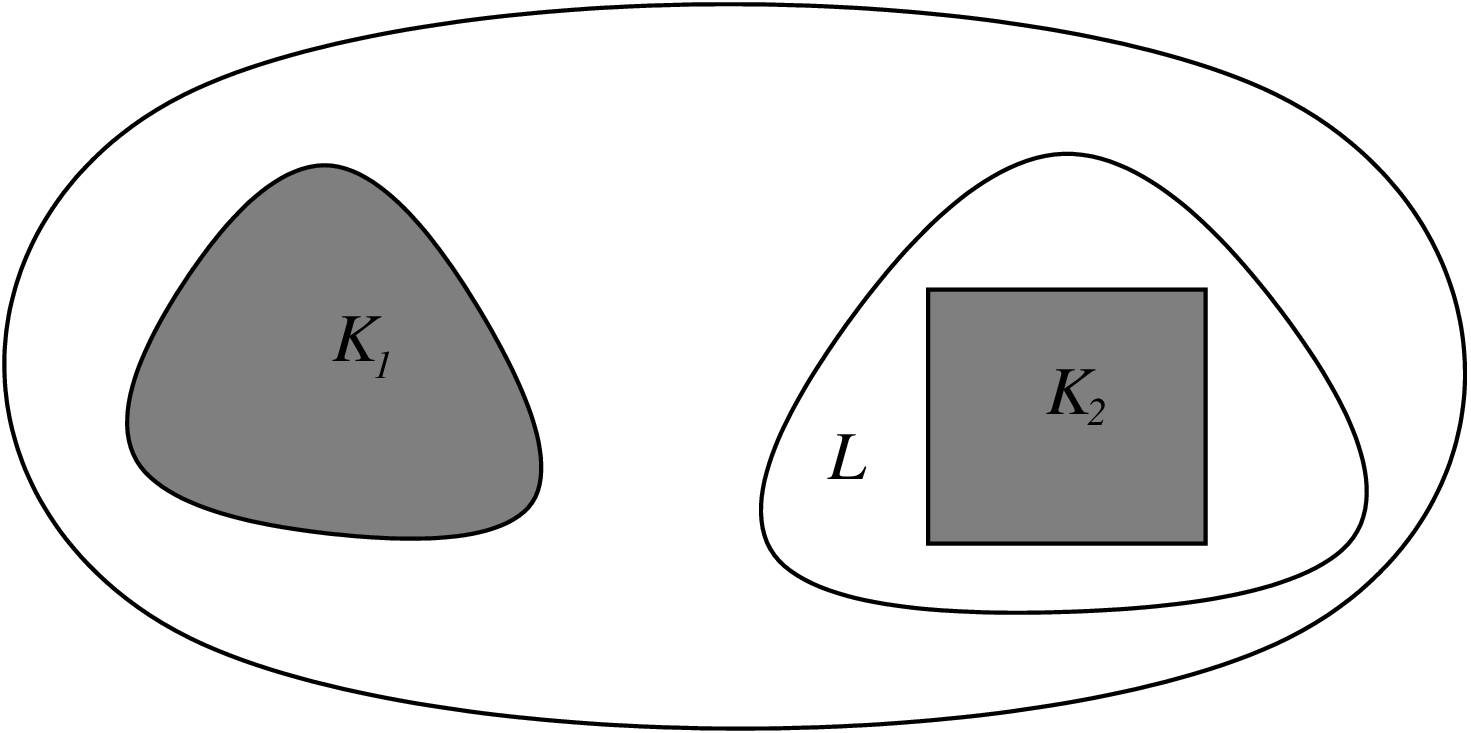}
\caption{The compact sets $K_1$, $K_2$ and $L$.}
\end{center}
\end{figure}

Also let $\o\subset\RR^N$ be a bounded domain with a $C^2$ boundary
such that $K_1\cup L\subset \o$ and $\o\setminus(K_1\cup L)$ is
connected. Let $\vphi, f$ be as in Lemma \ref{comp2}. Then, there
exists a unique solution
$$u\in C^2(\o\setminus(K_1\cup L))\cap C(\oo\setminus int(K_1\cup
L))$$ of the problem
\begin{equation}\label{edecu}
\left\{\begin{aligned}
-&\Delta u=\vphi(\delta_{K_1\cup K_2}(x))f(u), \; u>0&& \quad\mbox{ in }\Omega\setminus (K_1\cup L),\\
&u=0 &&\quad\mbox{ on }\partial(\Omega\setminus (K_1\cup L)).
\end{aligned}\right.
\end{equation}
Furthermore, there exist $c_1,c_2>0$ and $0<r_0<1$ such that the
unique solution $u$ of problem \eq{edecu} satisfies
\begin{equation}\label{bestiu}
c_1\leq \frac{u(x)}{H(\delta_{K_1}(x))}\leq c_2 \quad\mbox{ in
}\{x\in\o\setminus (K_1\cup L):0<\delta_{K_1}(x)<r_0\},
\end{equation}
where  $H$ is the unique solution of \eq{oregan}.
\end{lemma}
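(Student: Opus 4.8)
The plan is to reduce the statement to Lemma \ref{comp3} applied to the compact set $K_1\cup L$; the one delicate point is that the weight in \eq{edecu} involves $\delta_{K_1\cup K_2}$ while Lemma \ref{comp3} produces the weight $\delta_{K_1\cup L}$. Write $\Omega':=\Omega\setminus(K_1\cup L)$. Since $K_1\cap L=\emptyset$ and $K_2\subset int(L)$, the sets $K_1,K_2,L$ are pairwise at positive distance, so $\partial(K_1\cup K_2)=\partial K_1\cup\partial K_2$ and $\partial(K_1\cup L)=\partial K_1\cup\partial L$; moreover, for $x\notin L$ a shortest segment from $x$ to a point of $\partial K_2\subset int(L)$ must meet $\partial L$, whence ${\rm dist}(x,\partial K_2)\geq{\rm dist}(x,\partial L)$. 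Taking the minimum with ${\rm dist}(x,\partial K_1)$ gives
$$\delta_{K_1\cup L}(x)\leq\delta_{K_1\cup K_2}(x)\qquad\text{for all }x\in\Omega',$$
and, because a point near $\partial K_1$ is far from $\partial L$ and from $\partial K_2$, there is $\rho>0$ such that $\delta_{K_1\cup L}=\delta_{K_1\cup K_2}=\delta_{K_1}$ on $\{x\in\Omega':\delta_{K_1}(x)<\rho\}$, the latter set coinciding with $\{x\in\Omega:0<\delta_{K_1}(x)<\rho\}$ and having $\{\delta_{K_1}=\rho\}\cap\oo\subset\Omega'$.

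Apply Lemma \ref{comp3} with $K_1\cup L$ in place of $K$: this is legitimate because $K_1\cup L\subset\Omega$ is compact, $\Omega'$ is connected (hypothesis), $\partial\Omega'=\partial\Omega\cup\partial K_1\cup\partial L$ is a disjoint union of $C^2$ hypersurfaces, and $\vphi,f$ are as in Lemma \ref{comp2}. We obtain a unique $v\in C^2(\Omega')\cap C(\oo\setminus int(K_1\cup L))$ solving $-\Delta v=\vphi(\delta_{K_1\cup L}(x))f(v)$, $v>0$ in $\Omega'$, $v=0$ on $\partial\Omega'$, and, for some $r_1\in(0,1)$, $c_1\leq v(x)/H(\delta_{K_1\cup L}(x))\leq c_2$ on $\{x\in\Omega':0<\delta_{K_1\cup L}(x)<r_1\}$ (estimate \eq{besti} with $K=K_1\cup L$). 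By the first display, $-\Delta v=\vphi(\delta_{K_1\cup L}(x))f(v)\geq\vphi(\delta_{K_1\cup K_2}(x))f(v)$ in $\Omega'$, so $v$ is a super-solution of \eq{edecu}. For a sub-solution, let $w\in C^2(\Omega')\cap C(\overline{\Omega'})$ be the solution of $-\Delta w=1$ in $\Omega'$, $w=0$ on $\partial\Omega'$ (so $w>0$ in $\Omega'$); since $\vphi(\delta_{K_1\cup K_2}(\cdot))$ is bounded below on $\Omega'$ by the positive constant $\vphi\big(\max_{\overline{\Omega'}}\delta_{K_1\cup K_2}\big)$ and $w$ is bounded, the function $mw$ satisfies $-\Delta(mw)=m\leq\vphi(\delta_{K_1\cup K_2}(x))f(mw)$ in $\Omega'$ once $m>0$ is small enough, and $mw\leq v$ in $\Omega'$ by Lemma \ref{comp}. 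The sub and super-solution method together with the interior $C^2_{loc}$ bootstrap, carried out exactly as in the proofs of Lemmas \ref{comp2} and \ref{comp3}, then yields a solution $u$ of \eq{edecu} with $mw\leq u\leq v$ in $\Omega'$. Being trapped between two functions of $C(\oo\setminus int(K_1\cup L))$ that vanish on $\partial\Omega'$, $u$ belongs to $C^2(\Omega')\cap C(\oo\setminus int(K_1\cup L))$ and vanishes on $\partial(\Omega\setminus(K_1\cup L))$; uniqueness follows once more from Lemma \ref{comp}.

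There remains the boundary estimate \eq{bestiu}. Fix $r_0\in(0,\min\{\rho,r_1\})$ and put $\omega_1:=\{x\in\Omega':0<\delta_{K_1}(x)<r_0\}$; by the choice of $\rho$, $\omega_1=\{x\in\Omega:0<\delta_{K_1}(x)<r_0\}$, $\partial\omega_1\subset\partial K_1\cup\big(\{\delta_{K_1}=r_0\}\cap\oo\big)$ with the second piece a compact subset of $\Omega'$, and $\delta_{K_1\cup L}=\delta_{K_1\cup K_2}=\delta_{K_1}$ on $\omega_1$. The upper bound is immediate: $u\leq v\leq c_2H(\delta_{K_1\cup L}(x))=c_2H(\delta_{K_1}(x))$ on $\omega_1$. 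For the lower bound, since $u$ is continuous and positive on $\Omega'$ we may choose $M>1$ with $Mu\geq v$ on $\partial\omega_1\setminus\partial K_1$. On $\omega_1$ one has $-\Delta(Mu)=M\vphi(\delta_{K_1}(x))f(u)\geq\vphi(\delta_{K_1}(x))f(Mu)$ (using $M\geq1$ and that $f$ is decreasing), whereas $v$ solves $-\Delta v=\vphi(\delta_{K_1}(x))f(v)$ there; since $Mu=v=0$ on $\partial K_1$ and $Mu\geq v$ on the remaining part of $\partial\omega_1$, Lemma \ref{comp} applied on the open set $\omega_1$ gives $Mu\geq v$ in $\omega_1$. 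Together with $v\geq c_1H(\delta_{K_1\cup L}(x))=c_1H(\delta_{K_1}(x))$ on $\omega_1$, this yields $u\geq M^{-1}c_1H(\delta_{K_1}(x))$ on $\omega_1$, i.e. \eq{bestiu}.

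The only genuinely non-routine step is the lower bound in \eq{bestiu}: it rests on the coincidence of the three distance functions on the collar $\omega_1$ around $\partial K_1$ — which forces $r_0$ to be chosen below all the relevant geometric thresholds — and on comparing $Mu$ with $v$ up to the inner boundary $\{\delta_{K_1}=r_0\}$ with a suitable $M$. All the rest amounts to tracking constants and radii in complete parallel with the proofs of Lemmas \ref{comp2} and \ref{comp3}.
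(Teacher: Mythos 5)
Your proof is correct and follows essentially the same route as the paper: apply Lemma \ref{comp3} to $K_1\cup L$, use the inequality $\delta_{K_1\cup L}\leq\delta_{K_1\cup K_2}$ and the monotonicity of $\vphi$ to make that solution a super-solution of \eq{edecu}, take $mw$ as a sub-solution, and transfer the boundary estimate to a collar around $\partial K_1$ exactly as in the proof of Lemma \ref{comp3}. You supply details the paper leaves implicit (the geometric justification of $\delta_{K_1\cup L}\leq\delta_{K_1\cup K_2}$ and the comparison $Mu\geq v$ on the collar), but the argument is the same.
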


\begin{proof} By Lemma \ref{comp3} there exists a unique
$v\in C^2(\o\setminus(K_1\cup L))\cap C(\oo\setminus int(K_1\cup
L))$ such that
$$
\left\{\begin{aligned}
-&\Delta v=\vphi(\delta_{K_1\cup L} (x))f(v), \; v>0&& \quad\mbox{ in }\Omega\setminus (K_1\cup L),\\
&v=0 &&\quad\mbox{ on }\partial(\Omega\setminus (K_1\cup L)).
\end{aligned}\right.
$$
Since $\delta_{K_1\cup L}(x)\leq \delta_{K_1\cup K_2}(x)$ in
$\o\setminus(K_1\cup L)$ and $\vphi$ is nonincreasing, we derive
that $\overline u=v$ is a super-solution of \eq{edecu}. As a
sub-solution we use $\underline u=mw$ where $m$ is sufficiently
small and $w$ satisfies
$$
\left\{\begin{aligned}
-&\Delta w=1, \; w>0&& \quad\mbox{ in }\Omega\setminus (K_1\cup L),\\
&w=0 &&\quad\mbox{ on }\partial(\Omega\setminus (K_1\cup L)).
\end{aligned}\right.
$$
Therefore, problem \eq{edecu} has a solution $u$. The uniqueness
follows from Lemma \ref{comp} while the asymptotic behavior of $u$
around $K_1$ is obtained in the same manner as in Lemma
\ref{comp3}. This ends the proof.
\end{proof}

Several times in this paper we shall use the following elementary
results that provide an equivalent integral condition to
\eq{vphioptim}.

\begin{lemma}\label{zero}
Let $N\geq 3$ and $\vphi:(0,\infty)\ri [0,\infty)$ be a continuous
function.
\begin{enumerate}
\item[(i)] $\displaystyle \int_0^1 r\vphi(r) dr<\infty$ if and
only if $\displaystyle\int_0^1 t^{1-N}\int_0^t
s^{N-1}\vphi(s)dsdt<\infty$; \item[(ii)]
$\displaystyle\int_1^\infty r\vphi(r) dr<\infty$ if and only if
$\displaystyle\int_1^\infty t^{1-N}\int_1^t
s^{N-1}\vphi(s)dsdt<\infty$; \item[(iii)]
$\displaystyle\int_0^\infty r\vphi(r) dr<\infty$ if and only if
$\displaystyle\int_0^\infty t^{1-N}\int_0^t
s^{N-1}\vphi(s)dsdt<\infty$;
\end{enumerate}
\end{lemma}
\begin{proof}
We prove only (i). The proof of (ii) is similar, while (iii)
follows from (i)-(ii).

Assume first that $\int_0^1 r\vphi(r) dr<\infty$. Integrating by
parts we have
$$
\begin{aligned}
\int_0^1 t^{1-N}\int_0^t
s^{N-1}\vphi(s)dsdt&=-\frac{1}{N-2}\int_0^1\Big(t^{2-N}\Big)'\int_0^t
s^{N-1}\vphi(s)dsdt\\
&=\frac{1}{N-2}\left(\int_0^1t\vphi(t)dt-\int_0^1
t^{N-1}\vphi(t)dt\right)\\
&\leq \frac{1}{N-2}\int_0^1t\vphi(t)dt<\infty.
\end{aligned}
$$
Conversely, for $0<\ep<1/2$ we have
$$
\begin{aligned}
\int_\ep^1 t^{1-N}\int_0^t
s^{N-1}\vphi(s)dsdt&=\frac{1}{N-2}\left(\int_\ep^1t\vphi(t)dt-\int_0^1
t^{N-1}\vphi(t)dt+\ep^{2-N}\int_0^\ep t^{N-1}\vphi(t)dt    \right)\\
&\geq \frac{1}{N-2}\left(\int_\ep^1t\vphi(t)dt-\int_\ep^1
t^{N-1}\vphi(t)dt\right)\\
&=\frac{1}{N-2}\int_\ep^1(1-t^{N-2})t\vphi(t)dt\\
&\geq \frac{1}{N-2}\left(1-\Big(\frac{1}{2}\Big)^{N-2} \right)
\int_\ep^{1/2}t\vphi(t)dt.
\end{aligned}
$$
Passing to the limit with $\ep\searrow 0$ we deduce $\int_0^1
t\vphi(t)dt<\infty$. This concludes the proof of Lemma \ref{zero}.
\end{proof}

\section{Proof of Theorem \ref{thoptim2}}

We start first with two nonexistence results that will help us to
prove the necessary part in Theorem \ref{thoptim2}.

\begin{prop}\label{thvv} Let $\vphi:(0,\infty)\ri [0,\infty)$ and
$f:(0,\infty)\ri(0,\infty)$ be continuous functions such that:
\begin{enumerate}
\item[(i)] $\liminf_{t\searrow 0} f(t)>0$;  \item[(ii)] $\vphi(r)$
is monotone for $r$ large; \item[(iii)] $\int_1^\infty
r\vphi(r)\,dr=\infty$;
\end{enumerate}
Then, for any compact set $K\subset \RR^N$ $(N\geq 3)$ there does
not exist a $C^2$ positive solution $u(x)$ of \eq{phi}.
\end{prop}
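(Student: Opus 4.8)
The plan is to argue by contradiction via a spherical-averaging device. Suppose $u>0$ is a $C^2$ solution of \eq{phi} in $\RR^N\setminus K$. Choose $R_0>0$ so large that $K\subset B_{R_0}(0)$ and so that $\vphi$ is monotone on $[R_0,\infty)$. For $|x|\geq R_0$ we have $\delta_K(x)\geq |x|-R_0$, and since $\vphi$ is nonincreasing in the relevant regime (or, where it is monotone increasing, bounded below by a positive constant times a comparable power) we can bound $\vphi(\delta_K(x))$ from below by $c\,\vphi(2|x|)$ for $|x|\geq 2R_0$, say. Define the spherical average $\bar u(r)=\fint_{\partial B_r}u\,dS$ for $r\geq R_0$. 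Since $-\Delta u\geq 0$ outside $K$, $\bar u$ is a nonincreasing, positive function of $r$ (superharmonicity of averages in the exterior region), so $\bar u(r)\to L\geq 0$ as $r\to\infty$; in particular $\bar u$ is bounded, $0<\bar u(r)\leq \bar u(R_0)=:M$. By Jensen's inequality and the monotonicity of $f$, $\fint_{\partial B_r} f(u)\,dS \geq f(\bar u(r)) \geq f(M)>0$ using assumption (i) together with the fact that $f$ is decreasing hence $f(M)\geq \liminf_{t\searrow 0}f(t)$ is not the right direction --- instead use $f(\bar u(r))\geq f(M)$ directly since $\bar u(r)\leq M$ and $f$ decreasing, and $f(M)>0$.

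Next I would average the differential inequality. Writing $(r^{N-1}\bar u'(r))' = r^{N-1}\,\fint_{\partial B_r}\Delta u\,dS \leq -r^{N-1}\fint_{\partial B_r}\vphi(\delta_K(x))f(u)\,dS \leq -c\,r^{N-1}\vphi(2r)f(M)$ for $r\geq 2R_0$. Integrating once from $2R_0$ to $t$ gives $t^{N-1}\bar u'(t) \leq (2R_0)^{N-1}\bar u'(2R_0) - c\,f(M)\int_{2R_0}^t s^{N-1}\vphi(2s)\,ds$. Integrating a second time, from $2R_0$ to $\rho$, and using that $\bar u(\rho)>0$ for all $\rho$, one obtains $-\bar u(2R_0) < \bar u(\rho)-\bar u(2R_0) \leq C - c\,f(M)\int_{2R_0}^\rho t^{1-N}\int_{2R_0}^t s^{N-1}\vphi(2s)\,ds\,dt$ for a constant $C$ depending on $\bar u'(2R_0)$. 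Letting $\rho\to\infty$, the left side stays bounded but by Lemma \ref{zero}(ii) (after the change of variables $s\mapsto 2s$, which does not affect convergence) the double integral on the right diverges because of hypothesis (iii) $\int_1^\infty r\vphi(r)\,dr=\infty$. This contradiction completes the proof.

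The main obstacle is handling the case distinction in hypothesis (ii): $\vphi$ is only assumed \emph{monotone} for large $r$, not nonincreasing, so the comparison $\vphi(\delta_K(x))\geq c\,\vphi(2|x|)$ must be justified separately when $\vphi$ is eventually nondecreasing (in which case $\vphi(\delta_K(x))\geq \vphi(|x|-R_0)$ is already $\geq c\,\vphi(|x|)$ for $|x|\geq 2R_0$, even easier) versus eventually nonincreasing (where $\delta_K(x)\leq |x|$ gives $\vphi(\delta_K(x))\geq \vphi(|x|)$ directly). A secondary technical point is justifying that $\bar u$ is bounded: this follows because $r\mapsto r^{N-1}\bar u'(r)$ is nonincreasing (from $-\Delta u\geq 0$), so either $\bar u'\geq 0$ somewhere forcing issues, or $\bar u'\leq 0$ eventually and $\bar u$ decreasing and positive hence bounded; the edge cases are routine once one notes $\bar u$ cannot be increasing at infinity while staying a bounded superharmonic-type average. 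Everything else is bookkeeping with the integration-by-parts identity already exploited in Lemma \ref{zero}.
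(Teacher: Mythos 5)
Your overall strategy --- spherical averaging, a pointwise lower bound $\vphi(\delta_K(x))\ge\psi(|x|)$ for large $|x|$, Jensen's inequality, and two integrations played off against hypothesis (iii) --- is exactly the paper's proof, and the boundedness of $\bar u$ that you derive from the monotonicity of $r^{N-1}\bar u'(r)$ is obtained in the paper equivalently via the substitution $\rho=r^{2-N}$ (a positive concave function on a bounded interval is bounded; this is where $N\ge 3$ enters). However, there is a genuine gap at the Jensen step. Proposition \ref{thvv} assumes only that $f$ is continuous, positive, and satisfies $\liminf_{t\searrow 0}f(t)>0$; it does \emph{not} assume that $f$ is decreasing, let alone convex. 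Your chain ``$\frac{1}{|\partial B_r|}\int_{\partial B_r}f(u)\,d\sigma\ge f(\bar u(r))\ge f(M)$'' uses convexity (for Jensen) and monotonicity (to pass from $\bar u(r)\le M$ to $f(\bar u(r))\ge f(M)$), and for a general admissible $f$ (say one that is non-convex or tends to $0$ at infinity) both steps fail: $u$ may be very large on part of each sphere even though its spherical average is bounded, and $f(u)$ may be arbitrarily small there. The paper closes this gap by a preliminary reduction: it constructs a $C^1$ function $g<f$ on $(0,\infty)$ with $g'$ negative and nondecreasing (so $g$ is positive, convex and decreasing, using hypothesis (i)), and then runs the whole argument with $g$ in place of $f$. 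You need this reduction, or an equivalent device (for instance, a Chebyshev argument giving $u\le 2\bar u(r)$ on at least half of each sphere, combined with $\inf_{(0,2M]}f>0$).

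A second, minor point: in the case where $\vphi$ is eventually nondecreasing, your claimed bound $\vphi(|x|-R_0)\ge c\,\vphi(2|x|)$ is false in general (take $\vphi(r)=e^{e^r}$). It is also unnecessary: in that case $\vphi(\delta_K(x))\ge\vphi(R_0/2)>0$ is bounded below by a positive constant, and $\int^\infty r\,dr=\infty$ already yields the divergence. The paper packages both cases into the single function $\psi(r)=\min_{R_0/2\le\rho\le r}\vphi(\rho)$, which satisfies $\vphi(\delta_K(x))\ge\psi(|x|)$ (because $R_0/2\le\delta_K(x)\le|x|$) and $\int^\infty r\psi(r)\,dr=\infty$ regardless of the direction of monotonicity; you should adopt the same device rather than comparing $\vphi$ at different large arguments.
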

\begin{proof} It is easy to construct a $C^1$ function
$g:[0,\infty)\ri (0,\infty)$ such that $g<f$ in $(0,\infty)$ and
$g'$ is negative and nondecreasing. Therefore, we may assume
$f:[0,\infty)\ri (0,\infty)$ is of class $C^1$ and $f'$ is
negative and nondecreasing.

Suppose for contradiction that $u(x)$ is a $C^2$ positive solution
of \eq{phi}. By translation, we may assume that $0\in K$. Choose
$r_0>0$ such that
$$
K\subset B_{r_0/2}(0), \quad \vphi(r_0/2)>0, \quad\mbox{and}\quad
\vphi\mbox{ is monotone on }[r_0/2,\infty).
$$

Define $\psi:[r_0/2,\infty)\ri (0,\infty)$ by
\[
\psi(r)=\min_{r_0/2\leq \rho\leq r}\vphi(\rho)= \left\{
\begin{aligned}
\vphi(r)&& \quad\mbox{ if $\vphi$ is nonincreasing for }r\geq r_0/2,\\
\vphi(r_0/2)&& \quad\mbox{ if $\vphi$ is nondecreasing for }r\geq
r_0/2.
\end{aligned}\right.
\]
Then $\int_{r_0}^\infty r\psi(r) dr=\infty$. Also, since
$r_0/2\leq \delta_K(x)\leq |x|$ for all $x\in \RR^N\setminus
B_{r_0}(0)$, we have
\[
\vphi(\delta_K(x))\geq \psi(|x|)\quad\mbox{ for all } x\in
\RR^N\setminus B_{r_0}(0).
\]
Thus, the solution $u$ of \eq{phi} satisfies
\begin{equation}\label{psirad}
-\Delta u\geq \psi(|x|)f(u)\quad\mbox{ in }\RR^N\setminus
B_{r_0}(0).
\end{equation}

Averaging \eq{psirad} and using Jensen's inequality, we get
\begin{equation}\label{ave}
-\Big(\bar u''(r) + \frac{n-1}{r}\bar u'(r)\Big)\ge \psi(r)f(\bar
u(r)) \quad \text{for all }  r\ge r_0.
\end{equation}
Here $\bar u$ is the spherical average of $u$, that is
\begin{equation}\label{sfav}
\bar u(r)=\frac{1}{\sigma_N r^{N-1}}\int_{\partial B_r(0)}u(x)\,
d\sigma(x),
\end{equation}
where $\sigma$ denotes the surface area measure in $\RR^N$ and
$\sigma_N=\sigma(\partial B_1(0))$.

Making in \eq{ave} the change of variables $\bar u(r)=v(\rho)$,
$\rho=r^{2-N}$ we get
\[
-v''(\rho)\ge
\frac{1}{(N-2)^2}\rho^{2(N-1)/(2-N)}\psi(\rho^{1/(2-N)})f(v(\rho))
\quad\mbox{for all }0<\rho\leq \rho_0,
\]
where $\rho_0=r_0^{2-N}$.  Since $v$ is concave down and positive,
$v$ is bounded for $0<\rho\le \rho_0$. Hence $f(v(\rho)) \ge
(N-2)^2C$ for some positive constant $C$. Consequently
\[
-v''(\rho)\ge C\rho^{2(N-1)/(2-N)}\psi(\rho^{1/(2-N)}) \quad
  \text{for all }  0<\rho\le \rho_0.
\]
Integrating this inequality twice we get
\begin{align}
\infty & > \int_0^{\rho_0} v'(\rho)\,d\rho - \rho_0v'(\rho_0) \nonumber\\
       & \ge C\int_0^{\rho_0}\int_\rho^{\rho_0}
             \bar \rho^{2(N-1)/(2-N)}\psi(\bar \rho^{1/(2-N)})
             \,d\bar\rho\,d\rho \nonumber\\
       & = C\int_0^{\rho_0} \bar\rho^{1+ 2(N-1)/(2-N)}
             \psi(\bar \rho^{1/(2-N)})\,d\bar \rho \nonumber\\
       & = (N-2)C\int_{r_0}^\infty r\psi(r)\,dr=\infty. \nonumber
\end{align}
This contradiction completes the proof.
\end{proof}

\begin{prop}\label{thv} Let $\vphi:(0,\infty)\ri [0,\infty)$ and
$f:(0,\infty)\ri(0,\infty)$ be continuous functions such that
\begin{enumerate}
\item[(i)] $\liminf_{t\searrow 0} f(t)>0$; \item[(ii)] $\int_0^1
r\vphi(r)\,dr=\infty$.
\end{enumerate}
Then there does not exist a $C^2$ positive solution $u(x)$ of
\begin{equation}\label{or}
 -\Delta u\geq \vphi(\delta_{\o}(x))f(u)\quad \mbox{ in
}\{x\in\RR^N\setminus \oo:0<\delta_\o(x)<2r_0\}, \quad N\ge2,
\end{equation}
where $\o$ is a $C^2$ bounded domain in $\RR^N$ and $r_0>0$.
\end{prop}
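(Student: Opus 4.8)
The plan is to argue by contradiction: assume $u>0$ is a $C^2$ solution of \eq{or}. It suffices to work in the collar $\mathcal C:=\{x:0<\delta_\o(x)<r_0\}$, and after shrinking $r_0$ we may assume $\delta_\o\in C^2(\mathcal C)$ with $|\Delta\delta_\o|\le C_0$ there and that $\partial\mathcal C=\partial\o\cup\{\delta_\o=r_0\}$ is piecewise $C^1$. On $\mathcal C$ one has $-\Delta u\ge\vphi(\delta_\o)f(u)\ge0$, so $u$ is positive and superharmonic; this last fact is what I want to exploit.

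The core step is to test the inequality against the Green function of $\mathcal C$. Fix an interior point $z_0\in\mathcal C$, let $G$ be the Green function of $\mathcal C$ with pole $z_0$, and apply Green's representation on the exhausting subdomains $\mathcal C_n:=\{1/n<\delta_\o<r_0-1/n\}\subset\subset\mathcal C$ (where $u$ is $C^2$ up to the boundary); discarding the nonnegative harmonic-measure term and letting $n\to\infty$ (so $G_{\mathcal C_n}(z_0,\cdot)\uparrow G$), together with the Hopf estimate $G(z_0,x)\ge c_0\delta_\o(x)$ near $\partial\o$, yields, with $d\nu:=\delta_\o(x)\,\vphi(\delta_\o(x))\,dx$,
\begin{equation}\label{ghl}
\int_{\{0<\delta_\o<r_0/2\}}f(u)\,d\nu\ \le\ c_0^{-1}\int_{\mathcal C}G(z_0,x)(-\Delta u(x))\,dx\ \le\ c_0^{-1}u(z_0)\ <\ \infty .
\end{equation}
On the other hand, by the coarea formula ($|\nabla\delta_\o|\equiv1$) and since $\mathcal H^{N-1}(\{\delta_\o=t\})\to\mathcal H^{N-1}(\partial\o)>0$ as $t\to0$, the measure $\nu$ has infinite mass: $\nu(\{0<\delta_\o<r_0/2\})\ge c\int_0^{r_0/2}t\,\vphi(t)\,dt=\infty$ by hypothesis $(ii)$ (cf. Lemma \ref{zero}). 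Hence \eq{ghl} is absurd as soon as $f(u)$ is bounded below by a positive constant on the bulk of $\nu$. When $\inf_{(0,\infty)}f>0$ this is immediate and the proof is finished; the remaining issue is to handle an $f$ that decays, by showing that superharmonicity prevents $u$ from being large on too much of $\nu$.

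For this I would use the uniform exterior ball condition ($\partial\o\in C^2$ compact): there is $R\in(0,r_0)$ with $B_R(z_\xi)\subset\mathcal C$ for $z_\xi:=\xi+R\nu(\xi)$, $\xi\in\partial\o$, $\nu$ the exterior unit normal. Since $z_\xi$ runs over the compact set $\{\delta_\o=R\}$, one has $u(z_\xi)\le M:=\max_{\{\delta_\o=R\}}u<\infty$, and by the sub--mean value property of the superharmonic $u$ every spherical average $\frac{1}{\sigma_N s^{N-1}}\int_{\partial B_s(z_\xi)}u\,d\sigma$ is $\le u(z_\xi)\le M$, uniformly in $\xi$ and $0<s<R$. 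Combining these (equivalently, arguing through the Riesz decomposition of $u$ and the standard bound $\int_{\{t<\delta_\o<2t\}}G(x,y)\,dx\le Ct\,G(z_0,y)$) should give a constant $C_1$, independent of $t$, with $\frac{1}{|\{t<\delta_\o<2t\}|}\int_{\{t<\delta_\o<2t\}}u\,dx\le C_1$ for all small $t$. Markov's inequality then puts $u\le 2C_1$ on at least half (in Lebesgue measure) of every band $\{t<\delta_\o<2t\}$, and there $f(u)\ge c_1:=\inf_{(0,2C_1]}f>0$ by $(i)$. Summing over dyadic bands produces a set $E\subset\{0<\delta_\o<r_0/2\}$ of at least half the Lebesgue measure of each band, on which $f(u)\ge c_1$.

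The hard part, which I expect to be the main obstacle, is to convert this Lebesgue co-finiteness of $E$ into $\nu(E)=\infty$ — that is, to rule out the possibility that the (possibly highly singular) weight $t\vphi(t)$ of $\nu$ concentrates on the thin leftover set $\{u>2C_1\}$; once $\nu(E)=\infty$, \eq{ghl} is contradicted since $\nu(E)\le c_1^{-1}\int_E f(u)\,d\nu<\infty$. The additional input needed is again the quantitative bound $\int_{\mathcal C}G(z_0,\cdot)(-\Delta u)\le u(z_0)$, which limits the Riesz mass of $u$ near $\partial\o$ and hence the size of the regions where $u$ can be large; matching this against a dyadic bookkeeping of the mass of $\delta_\o\vphi(\delta_\o)$ is the delicate point. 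A cleaner alternative I would try is to carry out the whole argument inside a single exterior ball $B_R(z_\xi)$: averaging both sides of the inequality over the spheres $\partial B_s(z_\xi)$ (whose $u$-averages are automatically $\le M$) and setting $\rho=s^{2-N}$ reduces everything to a one-dimensional inequality $-v''(\rho)\ge(\text{weight})(\rho)$ for a positive, concave (hence bounded) function $v$, whose repeated integration makes the weight diverge exactly because $\int_0^1 t\vphi(t)\,dt=\infty$ (Lemma \ref{zero}), forcing $v<0$ — the desired contradiction.
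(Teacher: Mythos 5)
There is a genuine gap, and you have in fact flagged it yourself: the step converting ``$u\le 2C_1$ on half of each dyadic band in Lebesgue measure'' into ``$f(u)\ge c_1$ on a set of infinite $\nu$-measure'' is never carried out, and without it the contradiction with your Green-function bound does not follow. Since $\vphi$ is only continuous (no monotonicity is assumed in this proposition), the weight $\delta_\o(x)\vphi(\delta_\o(x))\,dx$ can concentrate its mass on thin sub-bands, and nothing in your Riesz-mass bookkeeping rules out that these coincide with the exceptional set $\{u>2C_1\}$. The ``cleaner alternative'' you offer does not repair this: if you average the inequality over spheres $\partial B_s(z_\xi)$ centered at the point $z_\xi=\xi+R\eta_\xi$ of an exterior ball, the function $\delta_\o$ is \emph{not} constant on such a sphere; only a small spherical cap of $\partial B_s(z_\xi)$ (of surface measure of order $(\delta_\o-(R-s))^{(N-1)/2}$ near the closest point) lies at distance close to $R-s$ from $\partial\o$. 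Consequently the spherical average of $\vphi(\delta_\o)$ is far smaller than $\vphi(R-s)$, the resulting one-dimensional weight is \emph{not} controlled from below by $t\vphi(t)$, and its iterated integral can converge even when $\int_0^1 t\vphi(t)\,dt=\infty$ (try $\vphi(t)=t^{-2}$ in dimension $N\ge5$). So the reduction to the radial computation of Proposition \ref{thvv} is not available here.

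The paper's proof resolves exactly this difficulty by averaging over the \emph{level sets} $\partial\o_r=\{\delta_\o=r\}$ of the distance function, on which $\vphi(\delta_\o)$ is genuinely constant. Using the tubular-neighborhood parametrization of Lemma \ref{diffgeom}, one shows (Lemma \ref{lg}) that $g(r)=\int_{\partial\o_r}u\,d\sigma$ satisfies $g'(r)+Cg(r)+C\ge\int_{\o_{r_0}\setminus\o_r}(-\Delta u)\ge 0$, whence a Gronwall argument gives $g(r)\le C_1$ and the averages $U(r)=g(r)/|\partial\o_r|$ are bounded. The key trick you are missing is then Jensen's inequality applied to a $C^1$ convex decreasing minorant of $f$: it converts ``$U(\rho)$ bounded'' directly into $\int_{\partial\o_\rho}f(u)\,d\sigma\ge|\partial\o_\rho|f(U(\rho))\ge\ep>0$, with no Markov-type measure bookkeeping at all. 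Two integrations in $r$ then yield $g(r_0)-g(r)+C_2r_0\ge\ep\int_r^{r_0}(\rho-r)\vphi(\rho)\,d\rho\ri\infty$, contradicting $g>0$. Your Green-function estimate and the coarea computation of the mass of $\nu$ are a sound reformulation of the first half of this scheme, but to finish you would still need the Jensen step on level sets (or an equivalent device), not the exterior-ball averaging.
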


For the proof of Proposition \ref{thv} we shall use the following
lemma concerning the geometry of a $C^2$ bounded domain. One can
prove it using the methods from \cite[page 96]{lang}.

\begin{lemma}\label{diffgeom} Let $\o$ be a $C^2$ bounded domain in $\RR^N$, $N\geq 2$, such
that $\RR^N\setminus \o$ is connected. Then, there exists $r_0>0$
such that
\begin{enumerate}
\item[(i)] $\o_r\colon\!\!\! =\{x\in\RR^N:{\rm dist}(x,\o)<r\}$ is
a $C^1$ domain for each $0<r\leq r_0$; \item[(ii)]for $0\leq r\leq
r_0$ the function $T(\cdot,r):\partial\o\ri \RR^N$ defined by
$T(\xi,r)=\xi+r\eta_\xi$, where $\eta_\xi$ is the outward unit
normal to $\partial\o$ at $\xi$, is a $C^1$ diffeomorphism from
$\partial\o$ onto $\partial\o_r$ (onto $\partial\o$ if $r=0$)
whose volume magnification factor (i.e., the absolute value of its
Jacobian determinant) $J(\cdot,r):\partial\o\ri (0,\infty)$ is
continuous on $\partial\o$ and $C^\infty$ with respect to $r$;
\item[(iii)] if $\eta_{T(\xi,r)}$ is the unit outward normal to
$\partial\o_r$ at $T(\xi,r)$ then $\eta_{T(\xi,r)}$ and $\eta_\xi$
are equal (but have different base points) for $\xi\in\partial\o$
and $0\leq r\leq r_0$.
\end{enumerate}
\end{lemma}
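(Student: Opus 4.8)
The plan is to realize the collar neighborhood of $\partial\Omega$ via the normal exponential map and then read off the three conclusions from the smoothness of that map together with the compactness of $\partial\Omega$. Since $\Omega$ is a $C^2$ bounded domain and $\mathbb{R}^N\setminus\Omega$ is connected, its boundary $\partial\Omega$ is a compact $C^2$ hypersurface without boundary; at each $\xi\in\partial\Omega$ the outward unit normal $\eta_\xi$ is well defined and $\xi\mapsto\eta_\xi$ is $C^1$ because the defining function of $\Omega$ can be taken $C^2$. Define $T:\partial\Omega\times[0,r_0]\to\mathbb{R}^N$ by $T(\xi,r)=\xi+r\eta_\xi$; this is $C^1$ in $\xi$ and $C^\infty$ (indeed affine) in $r$, which already gives the regularity statements about $J(\cdot,r)$ in (ii). The core analytic fact to establish is that for $r_0$ small enough $T(\cdot,r)$ is injective with nowhere-vanishing Jacobian, i.e., a $C^1$ diffeomorphism onto its image; this is exactly the statement that the tubular (collar) neighborhood theorem holds with a uniform radius, and it is here that compactness of $\partial\Omega$ is used. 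I would follow the argument of \cite[page 96]{lang}: first show $T$ is a local diffeomorphism near $r=0$ by computing its differential on $T_\xi\partial\Omega\oplus\mathbb{R}\partial_r$ — at $r=0$ it is the identity on the normal direction and the inclusion on the tangent space, so the Jacobian determinant equals $1$ at $r=0$ and hence stays positive for $r\le r_0$ by continuity and compactness — and then upgrade local injectivity to global injectivity on $\partial\Omega\times[0,r_0]$ by a standard compactness/contradiction argument (if injectivity failed for every $r_0=1/n$, extract convergent sequences of colliding points and derive that the differential is singular at a limit point, a contradiction).

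With $T$ a diffeomorphism, part (i) follows: $\Omega_r=\{x:\operatorname{dist}(x,\Omega)<r\}$ has boundary $\partial\Omega_r=T(\partial\Omega,r)$, which is the $C^1$ image of the $C^2$ hypersurface $\partial\Omega$ under a $C^1$ diffeomorphism, hence a compact $C^1$ hypersurface, so $\Omega_r$ is a $C^1$ domain for $0<r\le r_0$. The identification $\partial\Omega_r=T(\partial\Omega,r)$ itself needs a short argument: every point at distance exactly $r$ from $\Omega$ has a nearest point $\xi\in\partial\Omega$ and lies on the outward normal ray at $\xi$ (the nearest-point being $C^1$ and single-valued for small $r$, again by the tubular neighborhood theorem), giving $\{x:\operatorname{dist}(x,\Omega)=r\}\subseteq T(\partial\Omega,r)$, and the reverse inclusion follows because for small $r$ the segment from $\xi$ to $\xi+r\eta_\xi$ realizes the distance. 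Part (ii) is then immediate from the construction, with the volume magnification factor $J(\xi,r)=\lvert\det dT_{(\xi,r)}\rvert$; its continuity in $\xi$ comes from the $C^1$ dependence of $\eta_\xi$ and its smoothness in $r$ from the fact that, in a local $C^2$ chart of $\partial\Omega$ with the shape operator $S_\xi$, one has $J(\xi,r)=\prod_{i}(1+r\kappa_i(\xi))$ where $\kappa_i(\xi)$ are the principal curvatures, a polynomial in $r$.

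For part (iii), I would argue that the outward unit normal to the level hypersurface $\partial\Omega_r$ at the point $T(\xi,r)$ is parallel to $\nabla\operatorname{dist}(\cdot,\Omega)$ evaluated there, and that the gradient of the distance function at a point of the collar points along the normal ray through the corresponding foot point $\xi$; hence $\eta_{T(\xi,r)}=\eta_\xi$ as vectors (they differ only in base point). Concretely: parametrize $\partial\Omega_r$ by $\xi'\mapsto T(\xi',r)$ near $\xi$; differentiating in tangential directions of $\partial\Omega$ and using the Weingarten relation $\partial_{\xi'}\eta_{\xi'}=-S_{\xi'}$ (mapping into $T_{\xi'}\partial\Omega$), one sees that the tangent space to $\partial\Omega_r$ at $T(\xi,r)$ is spanned by $(I-rS_\xi)$ applied to $T_\xi\partial\Omega$, which (for $r\le r_0$ small so that $I-rS_\xi$ is invertible) is still the orthogonal complement of $\eta_\xi$; therefore the unit normal to $\partial\Omega_r$ at $T(\xi,r)$ is $\pm\eta_\xi$, and the outward one is $+\eta_\xi$.

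The step I expect to be the main obstacle is the uniform (in $\xi$) choice of the collar radius $r_0$ guaranteeing global injectivity of $T(\cdot,r)$ for all $r\le r_0$ simultaneously — i.e., ruling out that distinct normal rays from far-apart boundary points cross within distance $r_0$ of $\Omega$. This is the standard tubular-neighborhood estimate and hinges crucially on compactness of $\partial\Omega$ and on the uniform bound on the principal curvatures afforded by the $C^2$ regularity; everything else (the curvature formula for $J$, the normal-parallelism in (iii)) is routine differential geometry once the diffeomorphism is in hand. Since the paper explicitly points to \cite[page 96]{lang} for these methods, I would invoke that reference for the uniform collar estimate rather than reproving it, and spend the written proof mainly on deducing (i)--(iii) from it.
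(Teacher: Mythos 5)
The paper offers no proof of this lemma at all: it is stated with only a pointer to the methods of \cite[page 96]{lang}, so there is nothing internal to compare your argument against. What you have written is precisely the standard uniform tubular-neighborhood construction that the citation is meant to invoke, and it is correct: local invertibility of $T$ at $r=0$ together with compactness of $\partial\Omega$ gives a uniform collar radius and global injectivity, the identification $\partial\Omega_r=T(\partial\Omega,r)$ plus the formula $J(\xi,r)=\prod_i\bigl(1+r\kappa_i(\xi)\bigr)$ (continuous in $\xi$ because $\partial\Omega$ is only $C^2$, polynomial hence $C^\infty$ in $r$) yields (i) and (ii), and the Weingarten computation showing $T_{T(\xi,r)}\partial\Omega_r=(I-rS_\xi)\,T_\xi\partial\Omega=T_\xi\partial\Omega$ yields (iii).
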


\begin{proof}[ Proof of Proposition \ref{thv}] Without loss of generality we can assume $\RR^N\setminus \o$ is connected.
Suppose for contradiction that $u(x)$ is a $C^2$ positive solution
of \eq{or}. By decreasing $r_0$ if necessary, the conclusion of
Lemma \ref{diffgeom} holds.

\begin{lemma}\label{lg}
The function
\[
g(r)=\int_{\partial\o_r}u(x)\,d\sigma(x),\quad 0<r\leq r_0,
\]
is continuously differentiable and there exists a positive
constant $C$ such that
$$
\left|g'(r)- \int_{\partial\o_r}\frac{\partial
u}{\partial\eta}\,d\sigma(x)\right|\leq Cg(r)\quad\mbox{ for all
}0<r\leq r_0,
$$
where $\eta$ is the outward unit normal to $\partial\o_r$.
\end{lemma}

\begin{proof}[Proof of Lemma \ref{lg}] By Lemma \ref{diffgeom} we
have
$$
g(r)=\int_{\partial\o}u(\xi+r\eta_\xi)J(\xi,r)\,d\sigma(\xi)\quad\mbox{
for all } 0<r\leq r_0,
$$
and thus
\begin{equation}\label{cinci}
\begin{aligned}
g'(r)&=\int_{\partial\o}\left[\frac{\partial}{\partial r}
\Big(u(\xi+r\eta_\xi)\Big)\right]J(\xi,r)\,d\sigma(\xi)+
\int_{\partial\o}u(\xi+r\eta_\xi)J_r(\xi,r)\,d\sigma(\xi)\\
&=\int_{\partial\o_r}\frac{\partial u}{\partial \eta}(x)
\,d\sigma(x)+
\int_{\partial\o_r}u(x)\frac{J_r(\xi,r)}{J(\xi,r)}\,d\sigma(x),
\end{aligned}
\end{equation}
for all $0<r\leq r_0$, where in the last integral
$\xi=x-r\eta_\xi\in\partial\o$. Since, by Lemma \ref{diffgeom},
$J(\xi,r)$ is positive and continuous for $\xi\in\partial\o$ and
$0\le r\le r_0$ and $J_r(\xi,r)$ is continuous there, we see that
Lemma \ref{lg} follows from \eq{cinci}.
\end{proof}

We now come back to the proof of Proposition \ref{thv}. For
$0<r\le r_0$ we have
\begin{equation}\label{sase}
\begin{aligned}
0&\leq \int_{\o_{r_0}\setminus\o_r}-\Delta u(x)\,dx
=\int_{\partial\o_r}\frac{\partial
u}{\partial\eta}\,d\sigma(x)-\int_{\partial\o_{r_0}}\frac{\partial
u}{\partial\eta}\,d\sigma(x)\\
&\leq g'(r)+Cg(r)+C
\end{aligned}
\end{equation}
for some positive constant $C$ by Lemma \ref{lg}. Hence
$$
\Big(e^{Cr}(g(r)+1)\Big)'\geq 0 \quad\mbox{ for all }0<r\leq r_0,
$$
and integrating this inequality over $[r,r_0]$ we obtain
\begin{equation}\label{ee2}
g(r)\leq e^{C(r_0-r)}(g(r_0)+1)-1\leq C_1\quad\mbox{ for all
}0<r\le r_0
\end{equation}
and for some $C_1>0$. Thus
$$
U(r)\colon\!\!\!=\frac{1}{|\partial
\o_r|}\int_{\partial\o_r}u(x)\,d\sigma(x)=\frac{g(r)}{|\partial
\o_r|}
$$
is bounded for $0<r\le r_0$. Consequently, by the assumption (i)
of $f$, it follows that
\begin{equation}\label{ee}
{|\partial \o_\rho|}f(U(\rho))\geq \ep>0\quad\mbox{ for all
}0<\rho\leq r_0.
\end{equation}
As in the proof of Proposition \ref{thvv}, we may assume that
$f:[0,\infty)\ri (0,\infty)$ is of class $C^1$ and $f'$ is
negative and nondecreasing. From \eq{or}, \eq{sase}-\eq{ee} and
Jensen's inequality we now obtain
$$
\begin{aligned}
g'(r)+C_2&\geq \int_{\o_{r_0}\setminus\o_r}-\Delta u\,dx\\
&\geq \int_r^{r_0}\vphi(\rho)\int_{\partial\o_\rho}f(u(x))\,d\sigma(x)\,d\rho\\
&\geq \int_r^{r_0}\vphi(\rho)|\partial\o_\rho|f(U(\rho))\,d\rho\\
&\geq\ep \int_r^{r_0}\vphi(\rho)\,d\rho\quad\mbox{ for all
}0<r\leq r_0.
\end{aligned}
$$
Integrating over $[r,r_0]$ in the above estimate we find
$$
\begin{aligned}
g(r_0)-g(r)+C_2r_0
&\geq \ep \int_r^{r_0}\int_s^{r_0}\vphi(\rho)\,d\rho\,ds\\
&= \ep \int_r^{r_0}(\rho-r) \vphi(\rho)\,d\rho\ri
\ep\int_0^{r_0}\rho\vphi(\rho)\,d\rho=\infty\quad\mbox{ as
}r\searrow 0,
\end{aligned}
$$
which contradicts $g>0$ and completes the proof.
\end{proof}

\noindent{\bf Proof of Theorem \ref{thoptim2}}
 The necessity of \eq{vphioptim} follows
from Propositions \ref{thvv} and \ref{thv}. To prove the
sufficiency part we shall separately analyse the cases
$K_2=\emptyset$ and $K_2\neq\emptyset$.

\subsection{Case $K_2=\emptyset$}

Assume first that $\RR^N\setminus K$ is connected and let
$0<\rho<R$ be such that $K\subset B_\rho(0)$. By Lemma \ref{comp3}
there exists
$$
u\in C^2(B_\rho(0)\setminus K)\cap C(\overline B_\rho(0)\setminus
int(K))
$$
such that
\begin{equation}\label{edecxy}
\left\{\begin{aligned}
-&\Delta u=\vphi(\delta_K(x))f(u), \; u>0&& \quad\mbox{ in }B_\rho(0)\setminus K,\\
&u=0 &&\quad\mbox{ on }\partial(B_\rho(0)\setminus K).
\end{aligned}\right.
\end{equation}
We next construct a solution $v$ of \eq{phi} in a neighborhood of
infinity. To this aim, let
$$
A(r):=\int_r^\infty t^{1-N}\int_R^t
s^{N-1}\vphi(s-\rho)dsdt\quad\mbox{ for all } r\geq R.
$$
Since $\int_R^\infty r\vphi(r-\rho)dr<\infty$, by Lemma \ref{zero}
we have that $A$ is well defined for all $r\geq R$. Also, it is
easy to check that
$$
-\Delta A(|x|)=\vphi(|x|-\rho)\quad\mbox{ in }\RR^N\setminus
B_R(0).$$

Since the mapping
$$
[0,\infty)\ni t\longmapsto \int_0^t \frac{1}{f(s)}ds\in [0,\infty)
$$ is bijective, we can define
$v:\RR^N\setminus B_R(0)\ri (0,\infty)$ implicitly as the unique
solution of
\begin{equation}\label{zee}
\int_0^{v(x)}\frac{1}{f(t)}dt =A(|x|)\quad\mbox{ for all
}x\in\RR^N\setminus B_R(0).
\end{equation}
Then, using the properties of $A$ we deduce that $v\in
C^2(\RR^N\setminus B_R(0))$, $v>0$ and $v(x)\rightarrow 0$ as
$|x|\rightarrow \infty$. Further from \eqref{zee} we obtain
$$
\nabla A(|x|)=\frac{1}{f(v)}\nabla v\quad\mbox{ in }\RR^N\setminus
B_R(0),
$$
and
$$
\vphi(|x|-\rho)=-\Delta A(|x|)=\frac{f'(v)}{f^2(v)}|\nabla
v|^2-\frac{1}{f(v)}\Delta v\quad\mbox{ in }\RR^N\setminus B_R(0).
$$
Since $f$ is decreasing, we have $f'\leq 0$ which implies
$$
-\Delta v\geq \vphi(|x|-\rho)f(v)\quad\mbox{ in }\RR^N\setminus
B_R(0). $$ Therefore, $v\in C^2(\RR^N\setminus B_R(0))$ satisfies
\begin{equation}\label{edecxx}
\left\{\begin{aligned}
-&\Delta v\geq \vphi(\delta_K(x))f(v), \; v>0&& \quad\mbox{ in }\RR^N\setminus B_R(0),\\
&v(x)\ri 0 &&\quad\mbox{ as }|x|\ri\infty.
\end{aligned}\right.
\end{equation}
Let now $0<\rho_0<\rho$ be such that $K\subset B_{\rho_0}(0)$ and
let $u$, $v$ be the solutions of \eq{edecxy} and \eq{edecxx}
respectively. Consider
$$
w:(B_{\rho_0}(0)\setminus int(K)) \cup (\RR^N\setminus B_R(0))\ri
[0,\infty), $$ defined by
$$
w(x)=u(x)\mbox{ if }x\in B_{\rho_0}(0)\setminus int(K), \quad
w(x)=v(x)\mbox{ if }x\in \RR^N\setminus B_{R}(0).
$$
Let $W$ be a positive $C^2$ extension of $w$ to $\RR^N\setminus
K$. We claim that there exists $M>0$ large enough such that
\begin{equation}\label{uphi}
U(x)=W(x)+M(1+|x|^2)^{(2-N)/2}, \quad x\in\RR^N\setminus int(K)
\end{equation}
satisfies \eq{phi}. Indeed, since $(1+|x|^2)^{(2-N)/2}$ is
superharmonic, this condition is already satisfied in
$B_{\rho_0}(0)\setminus K$ and $\RR^N\setminus B_R(0)$. In
$B_{R}(0)\setminus B_{\rho_0}(0)$ we use the fact that $-\Delta
(1+|x|^2)^{(2-N)/2}$ is positive and bounded away from zero.
Therefore we have constructed a solution $U\in C^2(\RR^N\setminus
K)\cap C(\RR^N\setminus int(K))$ of \eq{phi} that tends to zero at
infinity.

Assume now that $\RR^N\setminus K$ is not connected. We shall
construct a solution to \eq{phi} by considering each component of
$\RR^N\setminus K$. Note that since $K$ is compact,
$\RR^N\setminus K$ has only one unbounded component on which we
proceed as above. Since $\vphi$ satisfies \eq{vphioptim}, by Lemma
\ref{comp2}, on each bounded component of $\RR^N\setminus K$ we
construct a solution of $-\Delta u=\vphi(\delta_K(x))f( u)$ that
vanishes continuously on the boundary and has the behavior
described by \eq{bestim}.

\subsection{Case $K_2\neq\emptyset$}

Proceeding in the same manner as above (see \eq{uphi}) we can find
a function
\[
U\in C^2(\RR^N\setminus K_1)\cap C(\RR^N\setminus int(K_1))
\]
such that
\begin{equation}\label{u}
\left\{\begin{aligned}
-&\Delta U\geq \vphi(\delta_{K_1}(x))f(U), \; U>0&& \quad\mbox{ in }\RR^N\setminus K_1,\\
&U(x)\ri 0 &&\quad\mbox{ as }|x|\ri\infty.
\end{aligned}\right.
\end{equation}
We next construct a function $V\in C^2(\RR^N\setminus K_{2})\cap
C(\RR^N)$ such that
\begin{equation}\label{v}
\left\{\begin{aligned}
-&\Delta V\geq \vphi(\delta_{K_2}(x))f(V), \; V>0&& \quad\mbox{ in }\RR^N\setminus K_2,\\
&V(x)\ri 0 &&\quad\mbox{ as }|x|\ri\infty.
\end{aligned}\right.
\end{equation}
Using \eq{vphioptim} and Lemma \ref{zero}(iii), the function
$$
D(r):=\int_{r}^\infty t^{1-N}\int_0^t
s^{N-1}\vphi(s)dsdt\quad\mbox{ for all } r\geq 0,
$$
is well defined and $ -\Delta D(|x|)=\vphi(|x|)$ in
$\RR^N\setminus \{0\}$. We next define $v:\RR^N\ri (0,\infty)$ by
$$
\int_0^{v(x)}\frac{1}{f(t)}dt =D(|x|)\quad\mbox{ for all
}x\in\RR^N.
$$
Using the same arguments as in the previous case we have $v\in
C^2(\RR^N\setminus \{0\})\cap C(\RR^N)$ and
\begin{equation}\label{edecaaa}
\left\{\begin{aligned}
-&\Delta v\geq \vphi(|x|)f(v), \; v>0&& \quad\mbox{ in }\RR^N\setminus \{0\},\\
&v(x)\ri 0 &&\quad\mbox{ as }|x|\ri\infty.
\end{aligned}\right.
\end{equation}
Let now $V:\RR^N\ri (0,\infty)$ defined by
$$
V(x)=\sum_{a\in K_2}v(x-a).
$$
By \eq{edecaaa} we have $V\in C^2(\RR^N\setminus K_{2})\cap
C(\RR^N)$, $V(x)\ri0$ as $|x|\ri\infty$ and
$$
\begin{aligned}
-\Delta V(x)&=-\sum_{a\in K_2}\Delta v(x-a)
\geq \sum_{a\in K_2}\vphi(|x-a|)f(v(x-a))\\
&\geq \Big(\sum_{a\in K_2}\vphi(|x-a|)\Big)f(V(x))
\geq \vphi(\min_{a\in K_2}|x-a|)f(V(x))\\
&=\vphi(\delta_{K_2}(x))f(V(x))\quad\mbox{ for all }x\in
\RR^N\setminus K_2.
\end{aligned}
$$
Therefore, $V$ fulfills \eq{v}. Now
\begin{equation}\label{wphii}
W:=U+V:\RR^N\setminus K\ri \RR
\end{equation}
satisfies $W(x)\ri0$ as $|x|\ri\infty$ and
$$
\begin{aligned}
-\Delta W(x)&\geq \vphi(\delta_{K_1}(x))f(U)+\vphi(\delta_{K_2}(x))f(V)\\
&\geq (\vphi(\delta_{K_1}(x))+\vphi(\delta_{K_2}(x)))f(W)\\
&\geq \vphi\Big(\min\{\delta_{K_1}(x), \delta_{K_2}(x)\}\Big)f(W)\\
&=\vphi(\delta_{K}(x))f(W)\quad\mbox{ for all }x\in \RR^N\setminus
K.
\end{aligned}
$$
Thus, $W$ is a solution of \eq{phi} and the proof of Theorem
\ref{thoptim2} is now complete. \qed

\begin{remark} The approach in Theorem \ref{thoptim2} can be used to study
the inequality \eq{phi} in some cases where the compact set $K$
consists of infinitely many components all of them with $C^2$
boundary. For instance, it is easy to see that the same arguments
apply for compact sets $K$ of the form
$$
K=B_1(0)\cup\bigcup_{n\geq 1}\left\{x\in\RR^N:
1+\frac{1}{2n+1}<|x|<1+\frac{1}{2n}\right\}
$$
or
$$
K=\partial B_1(0)\cup  \bigcup_{n\geq 1}\partial B_{1+1/n}(0).
$$
\end{remark}

\smallskip

\section{Proof of Theorem \ref{thoptim3}}

\subsection{Case $K_2=\emptyset$}

We shall assume that $\RR^N\setminus K$ is connected as using the
arguments in the proof of Theorem \ref{thoptim2} on any bounded
component of $\RR^N\setminus K$ we can construct a solution of
$-\Delta u=\vphi(\delta_K(x))f( u)$ that vanishes continuously on
its boundary and has the behavior described by \eq{bestim}.

According to Lemma \ref{comp3}, for any $n\geq 1$ there exists a
unique $$u_n\in C^2(B_{R+n}(0)\setminus K)\cap C(\overline
B_{R+n}(0)\setminus int(K)) $$ such that
\begin{equation}\label{edecxyz}
\left\{\begin{aligned}
-&\Delta u_n=\vphi(\delta_K(x))f(u_n), \; u_n>0&& \quad\mbox{ in }B_{R+n}(0)\setminus K,\\
&u_n=0 &&\quad\mbox{ on }\partial(B_{R+n}(0)\setminus K).
\end{aligned}\right.
\end{equation}
We extend $u_n=0$ on $\RR^N\setminus B_{R+n}(0)$ and by Lemma
\ref{comp} we have that $\{u_n\}$ is a nondecreasing sequence of
functions and $u_n\leq U$ in $\RR^N\setminus K$. Let
\[
\tilde u(x)=\lim_{n\ri\infty}u_n(x) \quad\mbox{ for all }
x\in\RR^N\setminus int(K). \] By standard elliptic arguments, we
have $\tilde u\in C^2(\RR^N\setminus K)$ and
\[
-\Delta \tilde u=\vphi(\delta_K(x))f(\tilde u)\quad\mbox{ in
}\RR^N\setminus K.
\]
We next prove that $\tilde u$ vanishes continuously on $\partial
K$.

Let $u_1$ be the unique solution of \eq{edecxyz} with $n=1$ and
$\omega:=\{x\in\RR^N\setminus K: 0<\delta_K(x)<1\}$. Since both
$u_1$ and $\tilde u$ are continuous and positive on
$\partial\omega\setminus K$, one can find $M>1$ such that
$Mu_1\geq \tilde u$ on $\partial\omega\setminus K$. Now, using the
fact that the sequence $\{u_n\}$ is nondecreasing, this also
yields $Mu_1\geq u_n$ on $\partial\omega\setminus K$, for all
$n\geq 1$. The above inequality also holds true on $\partial K$
(since $u_1$ and $u_n$ are zero there). Therefore $Mu_1\geq u_n$
on $\partial\omega$ for all $n\geq 1$ which by the comparison
result in Lemma 2.1 (note that the function $Mu_1$ satisfies
\eq{phi} in $\omega$) gives
$$
Mu_1\geq u_n \quad\mbox{ in } \omega,
$$
for all $n\geq 1$. Passing to the limit with $n\ri\infty$ in the
above estimate, we obtain $Mu_1\geq \tilde u$ in $\omega$ and
since $u_1$ vanishes continuously on $\partial K$, so does $\tilde
u$.

The boundary behavior of $\tilde u$ near $K$ follows from the fact
that $u_1\leq \tilde u\leq Mu_1$ in $\omega$ and $u_1$ satisfies
\eq{besti}. From Lemma \ref{comp} we obtain that any solution $u$
of \eq{phi} satisfies $u\geq u_n$ in $\RR^N\setminus K$ which
implies $u\geq \tilde u$. Hence, $\tilde u$ is the minimal
solution of \eq{phi}.

\subsection{Case $K_2\neq\emptyset$}

Using, if necessary, a dilation argument, we can assume that
dist$(K_1,K_2)>2$ and the distance between any two distinct points
of $K_2$ is greater than 2. We fix $R>0$ large enough such that
$$
K_1\cup \bigcup_{a\in K_2}\overline B_1(a)\subset B_R(0).
$$
We now apply Lemma \ref{comp4} for $L=\bigcup_{a\in K_2}\overline
B_{1/n}(a)$ and $\Omega=B_{R+n}(a)$. Thus, there exists a unique
solution $u_n$ of
\begin{equation}\label{edecma}
\left\{\begin{aligned} -&\Delta u_n=\vphi(\delta_K(x))f(u_n), \;
u_n>0&& \quad\mbox{ in }B_{R+n}(0)\setminus
\Big(K_1\cup\bigcup_{a\in K_2}\overline
B_{1/n}(a)  \Big),\\
&u_n=0 &&\quad\mbox{ on }\partial B_{R+n}(0)\cup \partial K_1\cup
\bigcup_{a\in K_2}\partial B_{1/n}(a).
\end{aligned}\right.
\end{equation}
Extending $u_n=0$ outside of $\overline B_{R+n}(0)\setminus
\bigcup_{a\in K_2}\overline B_{1/n}(a)$, by Lemma \ref{comp} we
obtain
$$
0\leq u_1\leq u_2\leq \cdots \leq u_n\leq u_{n+1}\leq
\cdots\quad\mbox{ in }\RR^N\setminus K.
$$
By Lemma \ref{comp} we obtain $u_n\leq W$ in $\RR^N\setminus K$,
where $W$ is defined by \eq{wphii}. Thus, passing to the limit in
\eq{edecma} and by elliptic arguments, we obtain that $\tilde
u:=\lim_{n\ri\infty}u_n$ satisfies
\[
-\Delta \tilde u=\vphi(\delta_K(x))f(\tilde u)\quad\mbox{ in
}\RR^N\setminus K.
\]
The fact that $\tilde u$ is minimal, vanishes continuously on
$\partial K_1$ and has the behavior near $\partial K_1$ as
described by \eq{bestim} follows exactly in the same way as in the
proof of Theorem \ref{thoptim2}.

It remains to prove that $\tilde u$ can be continuously extended
at any point of $K_2$ and $\tilde u>0$ on $K_2$. To this aim, we
state and prove the following auxiliary results.

\begin{lemma}\label{ls1} Let $r>0$ and $x\in\RR^N\setminus\partial B_r(0)$, $N\geq 3$. Then
$$
\frac{1}{\sigma_N r^{N-1}}\int_{\partial
B_r(0)}\frac{1}{|x-y|^{N-2}}d\sigma(y)=\left\{
\begin{aligned}
&\frac{1}{|x|^{N-2}} &&\quad\mbox{ if }\,|x|>r,\\
&\frac{1}{r^{N-2}} &&\quad\mbox{ if }\,|x|<r.
\end{aligned}
\right.
$$
\end{lemma}
\begin{proof}[Proof of Lemma \ref{ls1}]
Suppose first $|x|>r$. Then $u(y)=|y-x|^{2-N}$ is harmonic in
$B_{r+\ep}(0)$, for $\ep>0$ small. By the mean value theorem we
have
$$
\frac{1}{\sigma_N r^{N-1}}\int_{\partial
B_r(0)}\frac{1}{|x-y|^{N-2}}d\sigma(y)=u(0)=\frac{1}{|x|^{N-2}}.
$$
Assume now $|x|<r$. Since
$$
v(x):=\frac{1}{\sigma_N r^{N-1}}\int_{\partial
B_r(0)}\frac{1}{|x-y|^{N-2}}d\sigma(y)
$$
is harmonic and radially symmetric, it follows that $v$ is
constant in $B_r(0)$. Thus $v(x)=v(0)=r^{2-N}$ for $x\in B_r(0)$.
\end{proof}

\begin{lemma}\label{ls2} Let $u$ be a $C^2$ positive solution of
$$
-\Delta u\geq 0\quad\mbox{ in }B_{2r_1}(0)\setminus\{0\}, \;N\geq
2.
$$
Then
$$
u(x)\geq m:=\min_{|y|=r_1}u(y)\quad\mbox{for all } x\in\overline
B_{r_1}(0)\setminus \{0\}.
$$
\end{lemma}
\begin{proof}[Proof of Lemma \ref{ls2}] For $0<r_0<r_1$ define
$v_{r_0}:\RR^N\setminus\{0\}\ri \RR$ by
$$
v_{r_0}(x)=\frac{m(\Phi(r_0)-\Phi(|x|))}{\Phi(r_0)-\Phi(r_1)},
$$
where
$$
\Phi(r)=\left\{
\begin{aligned}
&\log\frac{1}{r} &&\quad\mbox{ if }N=2,\\
&\frac{1}{r^{N-2}} &&\quad\mbox{ if }N\geq 3.
\end{aligned}
\right.
$$
Then $v_{r_0}$ is harmonic in $\RR^N\setminus\{0\}$ and
$v_{r_0}\leq u$ on $\partial B_{r_1}(0)\cup \partial B_{r_0}(0)$.
Thus, by the maximum principle, $v_{r_0}\leq u$ in $\overline
B_{r_1}(0)\setminus B_{r_0}(0)$. Fix $x\in \overline
B_{r_1}(0)\setminus\{0\}$. Then, for $0<r_0<|x|$ we have $u(x)\geq
v_{r_0}(x)\ri m$ as $r_0\searrow 0$. This concludes the proof.
\end{proof}

\begin{lemma}\label{ls3} Let $\vphi, f:(0,\infty)\ri [0,\infty)$ be
continuous functions such that $\int_0^1 r\vphi(r)dr<\infty$.
Suppose that $u$ is a $C^2$ positive bounded solution of $-\Delta
u=\vphi(|x|)f(u)$ in a punctured neighborhood of the origin in
$\RR^N$, $N\geq 3$. Then, for some $L>0$ we have $u(x)\ri L$ as
$x\ri 0$.
\end{lemma}
\begin{proof}[Proof of Lemma \ref{ls3}] By Lemma \ref{ls2} we can find $r_0>0$ small such
that $u$ is bounded away from zero in $\overline
B_{r_0}(0)\setminus\{0\}$. Hence, for some $M>0$ we have
\begin{equation}\label{mmn}
f(u(x))\leq M\quad\mbox{ in } \overline B_{r_0}(0)\setminus\{0\}.
\end{equation}
For $x\in\RR^N$ let
$$
I(x):=\frac{1}{\sigma_N}\int_{B_{r_0}(0)}\frac{\vphi(y)f(u(y))}{|x-y|^{N-2}}dy.
$$
Then,
$$
I(x)=\int_0^{r_0}F(x,r)dr, \quad\mbox{ where }\quad
F(x,r)=\frac{\vphi(r)}{\sigma_N}\int_{\partial
B_r(0)}\frac{f(u(y))}{|x-y|^{N-2}}d\sigma(y).
$$
Since, by \eq{mmn} and Lemma \ref{ls1} we have
\begin{enumerate}
\item[(i)] $F(x,r)\leq M r\vphi(r)$ for $x\in\RR^N$ and $0<r<r_0$;
\item[(ii)] $\int_0^{r_0} r\vphi(r) dr<\infty$; \item[(iii)]
$F(x,r)\ri F(0,r)$ as $x\ri 0$ pointwise for $0<r<r_0$,
\end{enumerate}
it follows that $I$ is bounded in $\RR^N$ and by the dominated
convergence theorem,
\begin{equation}\label{dct}
I(x)\ri I(0)\quad\mbox{ as }x\ri 0.
\end{equation}
Since $v:=u-\frac{1}{N-2}I$ is harmonic and bounded in
$B_{r_0}(0)\setminus\{0\}$, it is well known that $\lim_{x\ri
0}v(x)$ exists. Thus, by \eq{dct}, $\lim_{x\ri 0}u(x)$ exists and
is finite.
\end{proof}

Now, the fact that the minimal solution $\tilde u$ can be
continuously extended on $K_2$ and $\tilde u>0$ on $K_2$ follows
by applying Lemma \ref{ls3} for each point of $K_2$. This finishes
the proof of Theorem \ref{thoptim2}. \qed

\begin{remark} The existence of a positive ground state
solution in the exterior of a compact set is a particular feature
of the case $N\geq 3$. Such solutions do not exist in dimension
$N=2$. Indeed, suppose that $u$ is a $C^2$ positive solution of
$$
-\Delta u\geq 0\quad\mbox{ in }\RR^2\setminus K,\quad u(x)\ri
0\quad\mbox{ as }|x|\ri \infty,
$$
where $K\subset \RR^2$ is a compact set, not necessarily with
smooth boundary. Choose $r_0>0$ such that $K\subset B_{r_0}(0)$
and let $m=\min_{|x|=r_0}u(x)>0$. For each $r_1>r_0$ define
$$
v_{r_1}:\RR^2\setminus\{0\}\ri \RR,\quad v_{r_1}(x)=\frac{m(\log
r_1-\log |x|)}{\log r_1-\log r_0}.
$$
Then
$$
v_{r_1}\mbox{ is harmonic in } \RR^2\setminus\{0\},\quad v_{r_1}=m
\mbox{ on }\partial B_{r_0}(0), \quad v_{r_1}=0 \mbox{ on
}\partial B_{r_1}(0).
$$
Let $w_{r_1}(x)=u(x)-v_{r_1}(x)$, $x\in\RR^N\setminus B_{r_0}(0)$.
Thus,
$$
-\Delta w_{r_1}=-\Delta u\geq 0\quad\mbox{ in }\overline
B_{r_1}(0) \setminus B_{r_0}(0),\quad w_{r_1}\geq 0\quad\mbox{ on
}\partial B_{r_1}(0)\cup \partial B_{r_0}(0).
$$
By the maximum principle it follows that $w_{r_1}\geq 0$ in $\overline
B_{r_1}(0)\setminus B_{r_0}(0)$, that is $u(x)\geq v_{r_1}(x)$ in
$\overline B_{r_1}(0)\setminus B_{r_0}(0)$.

Let now $x\in\RR^2\setminus\ \overline B_{r_0}(0)$ be fixed. Then,
for $r_1>|x|$ we have
$$
u(x)\geq v_{r_1}(x)\ri m\quad\mbox{ as }r_1\ri\infty,
$$
so $u(x)\geq m$ in $\RR^2\setminus \overline B_{r_0}(0)$, which
contradicts $u(x)\ri 0$ as $|x|\ri\infty$.
\end{remark}

\medskip

\section{Proof of Theorem \ref{k2}}
Assume first that \eq{phi} has a $C^2$ positive solution $u$. From
Proposition \ref{thvv} it follows that $\int_1^\infty
r\vphi(r)dr<\infty$. By translation one may assume that $0\in K$
and fix $r_0>0$ such that $\delta_K(x)=|x|$ for $0<|x|<r_0$. Let
now $u_*$ be the image of $u$ through the Kelvin transform, that
is,
\neweq{kelvin}
u_*(x)=|x|^{2-N} u\left(\frac{x}{|x|^2}  \right), \quad
x\in\RR^N\setminus B_{1/r_0}(0).
\endeq
Then $u_*$ satisfies
$$
\begin{aligned}
-\Delta u_*&\geq |x|^{-2-N} \vphi\left(\frac{1}{|x|}
\right)f\left( u\left(\frac{x}{|x|^2}\right)\right)\\
&=|x|^{-2-N} \vphi\left(\frac{1}{|x|} \right)f(|x|^{N-2}u_*(x))
\quad\mbox{ in }\RR^N\setminus B_{1/r_0}(0).
\end{aligned}
$$
By taking the spherical average of $u$ and then using the change
of variable $\rho=r^{2-N}$ as in the proof of Proposition
\ref{thvv} (note that here we do not need $\vphi(r)$ to be
monotone for small values of $r>0$) we deduce
\[
\int_1^\infty
t^{-1-N}\vphi\left(\frac{1}{t}\right)f(at^{N-2})dt<\infty.
\]
Now with the change of variable $r=t^{-1}$, $0<r\leq 1$ we derive
the condition \eq{eqp2}.

Conversely, assume now that \eq{eqp1}-\eq{eqp2} hold and let us
construct a solution to \eq{phi} in the case  $K=\{0\}$. This will
follow from lemma below.

\begin{lemma}\label{vab} Let $a>0$ be such that \eq{eqp1} and \eq{eqp2} hold. Then for all $b>0$ there
exists a radially symmetric function $v_{a,b}\in
C^2(\RR^N\setminus\{0\})$ such that
$$
-\Delta v_{a,b}\geq \vphi(|x|)f(v_{a,b})\quad\mbox{ in
}\RR^N\setminus\{0\},
$$
and
$$
\lim_{|x|\ri 0}|x|^{N-2}v_{a,b}(x)=a\,, \quad
\lim_{|x|\ri\infty}v_{a,b}(x)=b.
$$
\end{lemma}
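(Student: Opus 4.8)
The plan is to build $v_{a,b}$ as a radial function solving an ODE, obtained by integrating the radial Laplacian. Write $v_{a,b}(x) = \psi(|x|)$ and recall that for a radial function the operator $-\Delta$ reads $-\psi'' - \frac{N-1}{r}\psi'$. The natural candidate is to look for $\psi$ satisfying the integral equation
\begin{equation}\label{intv}
\psi(r) = a r^{2-N} + b + \int_r^\infty t^{1-N}\int_0^t s^{N-1}\vphi(s) f(\psi(s))\,ds\,dt,\qquad r>0,
\end{equation}
so that $\psi$ automatically verifies $-\psi'' - \frac{N-1}{r}\psi' = \vphi(r)f(\psi(r))$ (with equality, hence the required inequality), has $\lim_{r\to 0} r^{N-2}\psi(r) = a$ because the singular term $ar^{2-N}$ dominates both $b$ and the double integral near $0$, and has $\lim_{r\to\infty}\psi(r) = b$ because the tail of the double integral vanishes. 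First I would set up a monotone iteration or a fixed-point argument: start from $\psi_0(r) = a r^{2-N} + b$, which already satisfies $\psi_0 \geq a r^{2-N}$, and define $\psi_{n+1}$ by plugging $\psi_n$ into the right-hand side of \eqref{intv}. Since $f$ is decreasing and $\psi_n \geq a r^{2-N}$ for all $n$ by induction, the iterates stay bounded below by $ar^{2-N}$ and I can bound the correction term using $f(\psi_n(s)) \leq f(a s^{2-N})$.

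The two convergence conditions \eqref{eqp1} and \eqref{eqp2} are exactly what makes the correction term in \eqref{intv} finite. Near the origin, using $f(\psi(s)) \leq f(as^{2-N})$ and Lemma \ref{zero}(i)-type reasoning (integrating $\int_0^1 t^{1-N}\int_0^t s^{N-1}\vphi(s)f(as^{2-N})\,ds\,dt$ by parts against $(t^{2-N})'$) converts \eqref{eqp2} into the statement that $\int_0^1 t^{1-N}\int_0^t s^{N-1}\vphi(s)f(as^{2-N})\,ds\,dt < \infty$; near infinity, the outer integral $\int_1^\infty t^{1-N}(\cdots)\,dt$ is controlled by $\int_1^\infty r\vphi(r)\,dr < \infty$ (Lemma \ref{zero}(ii)) together with the boundedness of $f$ on the range of $\psi$ there (since $\psi$ stays bounded above and below near infinity). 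So the double integral in \eqref{intv} is a well-defined finite continuous function of $r$ on $(0,\infty)$, and a standard bootstrap shows that the limit $\psi$ of the iteration is $C^2$ on $(0,\infty)$ and solves the ODE.

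Once $\psi$ is constructed, the limit statements follow directly: as $r\to\infty$ the double integral tends to $0$ and $ar^{2-N}\to 0$, so $\psi(r)\to b$; as $r\to 0$, multiplying \eqref{intv} by $r^{N-2}$ gives $a + b r^{N-2} + r^{N-2}\int_r^\infty(\cdots) \to a$, where the last term vanishes because the estimate above shows the double integral is $o(r^{2-N})$ as $r\to 0$ (it is bounded, or at worst grows slower than $r^{2-N}$, near $0$). The main obstacle I anticipate is the delicate bookkeeping near the origin: one must verify that the monotone iteration does not push $\psi_n$ so low that $f(\psi_n)$ blows up faster than \eqref{eqp2} can absorb — this is why the lower bound $\psi_n \geq a r^{2-N}$ must be preserved at every step and why $f$ being decreasing is essential. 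A secondary technical point is checking that $v_{a,b}(x) = \psi(|x|)$ is genuinely $C^2$ at every $x \neq 0$, which reduces to $\psi \in C^2(0,\infty)$ and is routine once the integrand's continuity is in hand. Finally I would note that the construction gives a genuine supersolution (in fact a solution of the equation) of \eqref{phi} with $K=\{0\}$, and that summing translates $\sum_{a\in K_2} v_{a,b}(x-a)$ as in the proof of Theorem \ref{thoptim2} then yields a solution for general finite $K=K_2$.
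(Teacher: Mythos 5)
Your overall strategy --- the integral ansatz $\psi(r)=ar^{2-N}+b+\int_r^\infty t^{1-N}\int_0^t s^{N-1}\vphi(s)f(\psi(s))\,ds\,dt$ with iteration starting from $\psi_0(r)=ar^{2-N}+b$ --- is exactly the paper's. But there is a gap in the convergence step: because $f$ is \emph{decreasing}, the map $T(\psi):=\psi_0+\int_r^\infty t^{1-N}\int_0^t s^{N-1}\vphi(s)f(\psi(s))\,ds\,dt$ is order-\emph{reversing}, so the iteration is not monotone and the full sequence $\{\psi_n\}$ need not converge to a fixed point. What one actually gets is the interlacing $\psi_0\le\psi_2\le\cdots\le\psi_{2n}\le\psi_{2n+1}\le\cdots\le\psi_1$ (the paper's \eq{seqdef1}); the even and odd subsequences converge to limits $u\le v$ satisfying the coupled system $u=T(v)$, $v=T(u)$, and nothing forces $u=v$. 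Consequently your assertion that the limit of the iteration solves the ODE \emph{with equality} is unjustified. The lemma only asks for the inequality, and that is how the paper escapes: it sets $V(x)=v(|x|)$, for which $-\Delta V=\vphi f(u)\ge\vphi f(V)$ because $u\le V$ and $f$ is decreasing. (In fact the single iterate $\psi_1=T(\psi_0)$ already suffices, since $\psi_1\ge\psi_0$ gives $-\Delta\psi_1=\vphi f(\psi_0)\ge\vphi f(\psi_1)$, and $\psi_1$ has the two required limits.)

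A secondary inaccuracy: \eq{eqp2} does \emph{not} yield $\int_0^1 t^{1-N}\int_0^t s^{N-1}\vphi(s)f(as^{2-N})\,ds\,dt<\infty$. By Fubini that double integral equals $\frac{1}{N-2}\int_0^1(s-s^{N-1})\vphi(s)f(as^{2-N})\,ds$, which is comparable to $\int_0^1 s\,\vphi(s)f(as^{2-N})\,ds$ --- a strictly stronger requirement than \eq{eqp2}. What \eq{eqp2} does give is that the inner integral $\int_0^t s^{N-1}\vphi(s)f(\psi_{n-1}(s))\,ds$ is finite and tends to $0$ as $t\to0$; combined with \eq{eqp1} this makes $\int_r^\infty t^{1-N}\int_0^t(\cdots)\,dt$ finite for each fixed $r>0$ and forces $r^{N-2}\int_r^\infty t^{1-N}\int_0^t(\cdots)\,dt\to0$, which is what the limit $\lim_{r\ri0}r^{N-2}\psi(r)=a$ actually needs. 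So of your two alternatives (``bounded, or at worst grows slower than $r^{2-N}$''), only the second is available in general; the boundedness claim should be dropped, but the conclusion survives.
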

\begin{proof}
Let $u_0(r)=ar^{2-N}+b$, $r>0$ and for all $n\geq 1$ define
inductively the sequence
\begin{equation}\label{seqdef}
u_n(r)=u_0+\int_r^\infty t^{1-N}\int_0^t s^{N-1} \vphi(s)
f(u_{n-1}(s))dsdt\,, \quad r>0.
\end{equation}
Remark first that $u_n$ is well defined since $u_{n-1}\geq u_0$
and by Lemma \ref{zero}  we have
$$
\begin{aligned}
\int_r^\infty & t^{1-N}\int_0^t s^{N-1} \vphi(s) f(u_{n-1}(s))dsdt \leq \int_0^\infty t^{1-N}\int_0^t s^{N-1} \vphi(s) f(u_{0}(s))dsdt\\
\leq & \int_0^\infty r \vphi(r) f(u_{0}(r))dr \leq \int_0^1
r^{1-N}\vphi(r) f(ar^{2-N})dr+ f(b)\int_1^\infty r\vphi(r)dr
<\infty.
\end{aligned}
$$
A straightforward induction argument yields
\begin{equation}\label{seqdef1}
u_1\geq u_{2n-1}\geq u_{2n+1}\geq u_{2n}\geq u_{2n-2}\geq u_0\,,
\end{equation}
for all $n\geq 1$. Thus, there exists $u(r):=\lim_{n\ri
\infty}u_{2n}(r)$ and $v(r):=\lim_{n\ri \infty}u_{2n-1}(r)$,
$r>0$. Passing to the limit in \eq{seqdef} and \eq{seqdef1} we
find
\begin{equation}\label{seqdef2}
\left\{
\begin{aligned}
u(r)&=u_0+\int_r^\infty t^{1-N}\int_0^t s^{N-1} \vphi(s) f(v(s))dsdt\,, \quad r>0,\\
v(r)&=u_0+\int_r^\infty t^{1-N}\int_0^t s^{N-1} \vphi(s)
f(u(s))dsdt\,,\quad r>0,
\end{aligned}
\right.
\end{equation}
and $v\geq u$. Thus $V(x)=v(|x|)$ satisfies
$$
-\Delta V(x)=\vphi(|x|)f(u(|x|))\geq \vphi(|x|)f(V(x))\quad\mbox{
in }\RR^N\setminus\{0\}.
$$
Since the integrals in \eq{seqdef2} are finite, it is easy to
check that
$$
\lim_{|x|\ri 0}|x|^{N-2}V(x)=a\,, \quad \lim_{|x|\ri\infty}V(x)=b.
$$
Therefore, $v_{a,b}\equiv V$ satisfies the requirements in Lemma
\ref{vab}.
\end{proof}
If $K$ is a finite set of points and $V$ is any solution of
$-\Delta V\geq \vphi(|x|) f(V)$ in $\RR^N\setminus\{0\}$ then
$U(x):=\sum_{y\in K}V(x-y)$ is a solution of \eq{phi}.

Under the conditions \eq{eqp1}-\eq{eqp2}, the existence of the
minimal solution $\tilde u$ of \eq{phi} is obtained with the same
proof as in Theorem \ref{thoptim2}. Note that $\tilde u$ is
obtained as a pointwise limit of the sequence $\{u_n\}$ where
$u_n$ satisfies \eq{edecma} in which $K_1=\emptyset$ and $K_2=K$.
It remains to prove that $\tilde u$ can be continuously extended
to a positive continuous function in $\RR^N$ if and only if
$\int_0^1 r\vphi(r)dr<\infty$.

Assume first that the minimal solution $\tilde u$ of \eq{phi} is
bounded. Using a translation argument, one can also assume that
$0\in K$. Fix $r_0>0$ such that $\delta_K(x)=|x|$ for all $x\in
\overline B_{r_0}(0)$. Then averaging \eq{phi} we obtain
\begin{equation}\label{avgg}
-(r^{N-1}\bar u'(r))'\geq  cr^{N-1}\vphi(r)\quad\mbox{for all }
0<r\leq r_0,
\end{equation}
where $c>0$. Hence $r^{N-1}\bar u'(r)$ is decreasing and its limit
as $r\searrow 0$ must be zero for otherwise $\bar u-$and hence
$u-$would be unbounded for small $r>0$. Thus integrating \eq{avgg}
twice we obtain
$$
\infty>\Big(\limsup_{r\searrow 0} \bar u(r)\Big)-\bar u(r_0)\geq
c\int_0^{r_0} t^{1-N}\int_0^t s^{N-1}\vphi(s)dsdt,
$$
which by Lemma \ref{zero}(ii) yields $\int_0^1
r\vphi(r)dr<\infty$.

Assume now that $\int_0^1 r\vphi(r)dr<\infty$. The conclusion will
follow by Lemma \ref{ls3} once we prove that $\tilde u$ is bounded
around each point of $K$. Again by translation and a scaling
argument we may assume that $0\in K$ and $\delta_K(x)=|x|$ for all
$x\in B_{1}(0)$. Set
$$
v(x):=M\int_{|x|}^{2} t^{1-N}\int_0^t s^{N-1}\vphi(s)dsdt,\quad
\mbox{ for all } x\in B_{2}(0).
$$
By Lemma \ref{zero}(i), $v$ is bounded and positive in $B_{2}(0)$
and
\begin{equation}\label{vvx}
-\Delta v(x)=M\vphi(|x|)=M\vphi(\delta_K(x))\quad\mbox{ in
}B_1(0)\setminus\{0\}.
\end{equation}
Therefore, we can take $M>1$ large enough such that
\begin{equation}\label{vvxx}
-\Delta v(x)\geq \vphi(\delta_K(x))f(v(x))\quad\mbox{ in
}B_1(0)\setminus\{0\}\quad\mbox{ and }\quad v\geq \tilde
u\quad\mbox{ on }\partial B_1(0).
\end{equation}
Let $u_n$ be the solution of \eq{edecma} with $K_1=\emptyset$ and
$K_2=K$. Recall that $\{u_n\}$ converges pointwise to $\tilde u$.
Since $\tilde u\geq u_n$ in $\RR^N\setminus K$, from \eq{vvxx} we
have $v\geq u_n$ on $\partial B_1(0)$. According to the definition
of $u_n$, this inequality also holds true on $\partial
B_{1/n}(0)$. Now, by \eq{vvxx} and Lemma \ref{comp} it follows
that $v\geq u_n$ in $B_1(0)\setminus B_{1/n}(0)$. Passing to the
limit with $n\ri\infty$ it follows that $v\geq \tilde u$ in
$B_1(0)\setminus\{0\}$ and so, $\tilde u$ is bounded around zero.
In a similar way we derive that $\tilde u$ is bounded around
every point of $K$. By Lemma \ref{ls3} we now obtain that $\tilde
u$ can be continuously extended on $K$. This finishes the proof of
Theorem \ref{k2}.

\qed

\section{Proof of Theorem \ref{thdeg2} }

We shall divide our arguments into three steps.

\smallskip

\noindent{\it Step 1:} There exists a minimal solution
$\xi:\RR^N\setminus\{0\}\ri (0,\infty)$ which in addition
satisfies
\begin{equation}\label{limx}
\lim_{|x|\ri 0}|x|^{N-2}\xi(x)=0\quad\mbox{ and }\quad
\lim_{|x|\ri \infty}\xi(x)=0.
\endeq
Indeed, by Lemma \ref{crt} there exists a unique function $\xi_n$
such that
\begin{equation}\label{qqwq}
\left\{\begin{aligned}
-&\Delta \xi_n=\vphi(|x|)f(\xi_n),\; \xi_n>0 && \quad\mbox{ in } B_n(0)\setminus\overline B_{1/n}(0),\\
&\xi_n=0&&\quad\mbox{ on } \partial B_n(0)\cup\partial B_{1/n}(0).
\end{aligned}\right.
\end{equation}
By uniqueness, it also follows that $\xi_n$ is radially symmetric.
We next extend $\xi_n=0$ outside $B_n(0)\setminus \overline
B_{1/n}(0)$. Now, by Lemma \ref{comp} we have that $\{\xi_n\}$ is
nondecreasing. For any $\ep>0$, let $v_{\ep}$ be the function
constructed in Lemma \ref{vab} for $a=\ep$ and $b=\ep$. Then,
again by Lemma \ref{comp} we have $\xi_n\leq v_{\ep}$ in
$\overline B_n(0)\setminus B_{1/n}(0)$.

Hence, there exists $\xi(x):=\lim_{n\ri\infty} \xi_{n}(x)$,
$x\in\RR^N\setminus\{0\}$ and $\xi \leq v_{\ep}$. Also $\xi$ is
radially symmetric and by standard elliptic arguments it follows
that $\xi$ is a solution of \eq{rad}. From $\xi\leq v_{\ep}$ it
follows that $\lim_{|x|\ri 0}|x|^{N-2}\xi(x)\leq \ep$ and
$\lim_{|x|\ri \infty}\xi(x)\leq \ep$. Now, since $\ep>0$ was
arbitrarily chosen, we deduce that  $\xi$ satisfies \eq{limx}.

Finally, if $u$ is an arbitrary solution of \eq{rad}, by Lemma
\ref{comp} we deduce $\xi_n\leq u$  in  $B_n(0)\setminus \overline
B_{1/n}(0)$ which next implies that $\xi\leq u$ in
$\RR^N\setminus\{0\}$. Therefore $\xi$ is the minimal solution of
\eq{rad}.

\smallskip

\noindent{\it Step 2:} Proof of (i).

Fix $a,b\geq 0$. We shall construct a radially symmetric solution
of \eq{rad} that satisfies \eq{lim} with the aid of the minimal
solution $\xi$ constructed at Step 1. By virtue of Lemma
\ref{crt}, for any $n\geq 2$ there exists a unique function
\[
u_n\in C^2(B_n(0)\setminus\overline B_{1/n}(0))\cap C(\overline
B_n(0)\setminus B_{1/n}(0)) \] such that
\begin{equation}\label{zeroa}
\left\{\begin{aligned}
-&\Delta u_n=|x|^{\alpha}u_n^{-p},\; u_n>0 && \quad\mbox{ in } B_n(0)\setminus\overline B_{1/n}(0),\\
&u_n=a|x|^{2-N}+b+\xi(x)&&\quad\mbox{ on } \partial
B_n(0)\cup\partial B_{1/n}(0).
\end{aligned}\right.
\end{equation}
Since $\xi$ is radially symmetric, so is $u_n$. Furthermore,
$a|x|^{2-N}+b$ is a sub-solution while $a|x|^{2-N}+b+\xi(x)$ is a
super-solution of \eqref{zeroa}. Thus, in view of Lemma
\ref{comp}, we obtain
\begin{equation}\label{eqvv}
a|x|^{2-N}+b\leq u_n(x)\leq a|x|^{2-N}+b+\xi(x) \quad \mbox{ in
}B_n(0)\setminus B_{1/n}(0).
\end{equation}
As before we extend $u_n=0$ outside $B_n(0)\setminus\overline
B_{1/n}(0)$. By standard elliptic regularity and a diagonal
process, up to a subsequence there exists
$$
u_{a,b}(x):=\lim_{n\rightarrow \infty}u_n(x),\quad
x\in\RR^N\setminus\{0\}$$ and $u_{a,b}$ is a solution of problem
\eq{rad}. Furthermore, from \eqref{eqvv} we deduce that $u_{a,b}$
satisfies
\begin{equation}\label{aeq}
a|x|^{2-N}+b\leq u_{a,b}(x)\leq a|x|^{2-N}+b+\xi(x)\quad\mbox{ in
} \RR^N\setminus\{0\}.
\end{equation} Now, \eqref{limx} and \eq{aeq} imply \eq{lim}.

\smallskip

\noindent{\it Step 3:}  Proof of (ii).

Let $u$ be a solution of \eq{rad}. By Lemma 2.4 in \cite{gmt} (see
also Brezis and Lions \cite{brezis}) we have $u\in
L^1_{loc}(\RR^N)$ so there exists $a\geq 0$ such that
$$
\Delta u+\vphi(|x|)u^{-p}+a\delta(0)=0\quad\mbox{ in
}\mathcal{D}'(\RR^N),
$$
where $\delta(0)$ denotes the Dirac mass concentrated at zero.
Now, by the representation formula in \cite[Theorem
2.4]{mitidieri} we have
$$
u(x)=a|x|^{2-N}+b+C(N)\int_{\RR^N}\frac{\vphi(|y|)f(u(y))}{|x-y|^{N-2}}
dy\quad\mbox{ in }\RR^N\setminus\{0\}.
$$
Since $\xi$ is also a solution of \eq{rad} that satisfies
\eq{limx} we have
$$
\xi(x)=C(N)\int_{\RR^N}\frac{\vphi(|y|)f(\xi(y))}{|x-y|^{N-2}}
dy\quad\mbox{ in }\RR^N\setminus\{0\}.
$$
Using now the monotonicity of $f$ we deduce
$$
a|x|^{2-N}+b\leq u\leq a|x|^{2-N}+b+\xi\quad\mbox{ in
}\RR^N\setminus\{0\}.
$$
This implies $\lim_{|x|\ri 0}|x|^{N-2} u(x)=a$ and $\lim_{|x|\ri
\infty}u(x)=b$.

Let now $u_{a,b}$ be the solution of \eq{rad} that satisfies
\eq{lim}. We claim that $u\equiv u_{a,b}$. To this end, for
$\varepsilon>0$ define
$$
u_\varepsilon(x):=u(x)+\varepsilon (|x|^{2-N}+1),\quad
x\in\RR^N\setminus\{0\}. $$ Then, we can find $R=R(\ep)>0$ such
that $u_{\ep}\geq u_{a,b}$ if $|x|>R$ or $0<|x|<1/R$. By means of
Lemma \ref{comp} the same inequality is true in $B_R(0)\setminus
B_{1/R}(0)$, so $u_{\ep}\geq u_{a,b}$ in $\RR^N\setminus \{0\}$.
Passing now to the limit with $\ep\ri 0$ it follows that $u\geq
u_{a,b}$ in $\RR^N\setminus\{0\}$. In the same way we obtain the
reverse inequality so $u\equiv u_{a,b}$. This finishes the proof
of our Theorem \ref{thdeg2}. \qed

\end{document}